\newtheorem{Theorem}{Theorem}[section]
\newtheorem{Lemma}[Theorem]{Lemma}
\newtheorem{Proposition}[Theorem]{Proposition}
\newtheorem{Main Theorem}[Theorem]{Main Theorem}
\newtheorem{Definition}[Theorem]{Definition}
\theoremstyle{definition}
\newtheorem{Remark}[Theorem]{Remark}
\DeclareMathAlphabet\mathbfcal{OMS}{cmsy}{b}{n}
\renewcommand*\env@matrix[1][*\c@MaxMatrixCols c]{%
  \hskip -\arraycolsep
  \let\@ifnextchar\new@ifnextchar
  \array{#1}}
\newcommand{\CO}{\mathcal{O}}
\newcommand{\Q}{\mathbb{Q}}
\newcommand{\Z}{\mathbb{Z}}
\newcommand{\Spec}{{\mathrm {Spec}}}
\newcommand{\even}{ {\rm even}}
\newcommand{\odd}{ {\rm odd}}
\newcommand{\Mloc}{\mathbb{M}^{\rm loc}}
\newcommand{\Mat}{{\rm Mat}}
\newcommand{\quash}[1]{}  
\author[Ioannis Zachos ]{ Ioannis Zachos} 
\address{
Department of Mathematics \\ 
Michigan State University   \\ 
East Lansing, Michigan}
\email{zachosio@msu.edu}
\begin{document}
\date{}
\title[On orthogonal local models of Hodge type]{On orthogonal local models of Hodge type}

\maketitle

\begin{abstract}
We study local models that describe the singularities of Shimura varieties of non-PEL type for orthogonal groups at primes where the level subgroup is given by the stabilizer of a single lattice. In particular, we use the Pappas-Zhu construction and we give explicit equations that describe an open subset around the ``worst" point of  orthogonal local models given by a single lattice. These equations display the affine chart of the local model as a hypersurface in a determinantal scheme. Using this we prove that the special fiber of the local model is reduced and Cohen-Macaulay. 
\end{abstract}
{
  \hypersetup{linkcolor=black}
  \tableofcontents
}

\section{Introduction}
Local models of Shimura varieties are projective flat schemes over the spectrum of a discrete valuation ring. These projective schemes are expected to model the singularities of integral models of Shimura varieties with parahoric level structure. The definition of local model was formalized to some degree by Rapoport and Zink in \cite{RZ}. However, it was soon realized that the Rapoport-Zink construction is not adequate when the group of the Shimura variety is ramified at $p$ and in many cases of orthogonal groups. Indeed, then the corresponding integral models of Shimura varieties are often not flat (\cite{Pappas}). In \cite{PZ}, Pappas and Zhu gave a general group theoretic definition of local models. These local models appear as subschemes of global (“Beilinson-Drinfeld”) affine Grassmannians and are associated to \emph{local model triples}. A LM-triple over a finite extension $F$ of $\mathbb{Q}_p$, for $p\neq 2$, is a triple $(G, \{\mu\}, K)$ consisting of a reductive group $G$ over $F$, a conjugacy class of cocharacters $\{\mu\}$ of $G$ over an algebraic closure of $F$, and a parahoric subgroup $K$ of $G(F)$. We denote by $\Mloc_K(G,\{\mu\})$ the corresponding local model.
 
 In the present paper, we study local models for Shimura varieties for forms of the orthogonal group which are of Hodge but not PEL type. An example of such a Shimura variety is the following: Consider the group $ \mathbf{G}= {\rm GSpin}(\mathbf{V})$, where $\mathbf{V}$ is a (non-degenerate)
orthogonal space of dimension $d \geq 7$ over $\mathbb{Q}$ and the signature of $\mathbf{V}_{\mathbb{R}}$ is  $(d-2, 2)$. Let $\mathbf{D}$ be the space of oriented negative definite planes in $\mathbf{V}_{\mathbb{R}}$. Then the pair $(\mathbf{G},\mathbf{D})$ is a Shimura datum of Hodge type. Further, consider a $ \mathbb{Z}_p$-lattice $\Lambda$ in $V=\mathbf{V} \otimes_{\mathbb{Q}}  \mathbb{Q}_p$,  for which 
\[
p\Lambda^\vee \subset \Lambda\subset \Lambda^\vee, 
\]
where $\Lambda^{\vee}$ is the dual of $\Lambda$ for the corresponding symmetric form. 
We denote by $l$ the distance of the lattice $\Lambda$ to its dual $ \Lambda^{\vee}$, i.e. $l=\text{lg}_{\Z_p}( \Lambda^{\vee}/ \Lambda)$ and we set $l_*={\rm min}(l, d-l)$.  We let $K_1$ be the connected stabilizer of $\Lambda$ in ${\rm SO}({\mathbf V})(\Q_p)$ and let $K$ be the corresponding parahoric subgroup of $\mathbf{G}(\Q_p)$. Now, for a compact open subgroup $\mathbf{K} \subset \mathbf{G}( \mathbb{A}_f )$ of the form $\mathbf{K} = K^p \cdot K_p $ where $K_p = K $ and $K^p$ is sufficiently small, the corresponding Shimura variety is 
$$
{\rm Sh}_{\mathbf{K}}(\mathbf{G},\mathbf{D}) = \mathbf{G}(\mathbb{Q}) \backslash (\mathbf{D} \times \mathbf{G}(\mathbb{A}_f )/ \mathbf{K}).
$$
This complex space has a canonical structure of an algebraic variety over the reflex field $E(\mathbf{G},\mathbf{D})$ (see \cite{Milne}). The work of Kisin and Pappas \cite{KP} gives that orthogonal Shimura varieties as above admit integral models, whose singularities are the ``same" as those of the corresponding PZ local models. Let us mention here that one application of such orthogonal Shimura varieties lies in arithmetic intersection theory. For example, orthogonal Shimura varieties are used in the proof of the averaged Colmez conjecture (see \cite{FGHM} and \cite{AGHP}). In the rest of the paper, we will only consider local models and Shimura varieties  will not appear again.
Note that there is a central extension (see \cite{M.P})
\[
1\to {\mathbb G}_m\to {\rm GSpin}(V)\to {\rm SO}(V)\to 1.
\]
Hence, by \cite[Proposition 2.14]{PRH}, the local model that pertains to the above Shimura variety is $ \Mloc(\Lambda) \! \! = \! \Mloc_{K_1}( {\rm SO}(V),\{\mu\})$ for the LM triple $({\rm SO}(V), \{\mu\}, K_1)$ where $V$, $K_1$, are as above and we take  the minuscule coweight $\mu : \mathbb{G}_m \rightarrow {\rm SO}(V) $ to be given by $\mu(t) = \text{diag} ( t^{-1}, 1,\ldots ,1, t ).$ In fact, we will consider a more general situation in which $\Q_p$ is replaced by a finite field extension $F$ of $\Q_p$ with integers $\CO_F$.
 
 In this paper, we give an explicit description of $\Mloc(\Lambda)$. The difficulty in this task arises from the fact that the construction of PZ local models is inexplicit and group theoretical. In particular, in order to define the PZ local model we have to take the reduced Zariski closure of a certain orbit inside a global affine Grassmannian. We refer the reader to Subsection $2.1$ where the construction of the PZ local models is reviewed. In the case of local models of PEL type one can use the standard representation of the group to quickly represent the local model as a closed subscheme
 of certain linked (classical) Grassmannians (see \cite{PZ}). This is not possible here since the composition $i\cdot \mu$, where 
 $i: {\rm SO}(V)\hookrightarrow {\rm GL}(V)$ is the natural embedding, is not a minuscule coweight and we have to work harder.
Nevertheless, we give explicit equations for an affine chart of the “worst” point of the local model. These equations display this chart as a quadric hypersurface given by the vanishing of a trace in a determinantal scheme of $2 \times 2$ minors. Using this and classical results on determinantal varieties we prove that the special fiber of the affine chart is reduced and Cohen-Macaulay. This implies that the special fiber of the local model is reduced and Cohen-Macaulay. The results in this paper are used in \cite{PaZa} to construct regular integral models of certain related Shimura varieties. Note here that the ``reduced'' result follows from Pappas-Zhu paper \cite{PZ}, which in turn uses Zhu's proof of the Pappas-Rapoport coherence conjecture (see \cite{Z}). We want also to mention the recent work of Haines and Richarz \cite{HR}, where the authors prove in a more general setting that the special fiber of the PZ local models is reduced and Cohen-Macaulay.  
  
Here, we give an independent elementary proof of these properties by using the explicit equations which, as we said above, describe an open subset around the ``worst'' point of our local model. We also calculate the number of the irreducible components of the special fiber of the affine chart. This is equal to the number of irreducible components of the special fiber of the local model. The reason behind these implications lies in the construction of the local model. In particular, as discussed in \cite{PZ} the geometric special fiber of the PZ local model is a union of affine Schubert varieties. Among those there is a unique closed orbit which consists of a single point, the ``worst" point. The one-point stratum lies in the closure of every other stratum. It follows that, if the special fiber of the local model has a certain nice property at the worst point (for example reducedness), then this should hold everywhere (see for example \cite{Go}). Therefore, in the rest of the paper we mainly focus on the affine chart of the worst point. Note that below we denote by $\mathcal{O}$ the ring of integers of $\breve F$, which is the completion of the maximal unramified extension of $F$ in a fixed algebraic closure, and by $k$ the residue field of $\breve F$. 
 
 We now give an overview of the paper: In Section $2$ we review the definition of the PZ local models. In Section $3$ we show how we derive the explicit equations. We describe an affine chart of the worst point of our orthogonal local model in the cases where $(d,l)=(\even,\even)$, $(d,l)= (\odd,\odd)$, $(d,l)= (\even,\odd)$ and $(d,l)= (\odd,\even)$. 
 
Note that when $l$ is even the symmetric form on $V\otimes_F\breve F$ splits and when $l$ is odd the symmetric form on $V\otimes_F\breve F$ is quasi-split but not split. 
 
 The case that $ l_* \leq 1,$ has been considered by Madapusi Pera in \cite{M.P} and also in the joint work of He, Pappas and Rapoport \cite{PRH}. In the last chapter of \cite{PRH}, the authors easily prove that in the case $l_*=0$ the local model is isomorphic to a smooth quadric. With some more work they prove that in the case $l_*=1$ the local model is isomorphic to a quadric which is singular in one point. Here, we assume that $l_* > 1$ (and also $d\geq 5$) and extend these results.

Before stating our main theorems we need some more notation. Thus, let $n= \lfloor d/2 \rfloor $, $ r = \lfloor l/2 \rfloor$ and  $X$ be a $d \times d$ matrix of the form: 
    $$ X =\left[\ 
\begin{matrix}[c|c|c]
E_1 & O_1 & E_2 \\ \hline
B_1 & A & B_2 \\ \hline
E_3 & O_2 & E_4
\end{matrix}\ \right],$$
where  $E_i \in \Mat_{(n-r) \times (n-r)}$, $ O_j \in \Mat_{(n-r) \times l} $, $ B_{\ell} \in \Mat_{l \times (n-r)}$ and $ A \in \Mat_{l \times l}$. We write $\mathcal{O}[X]$, $ \mathcal{O}[B_1|B_2]$ for the polynomial rings over $\mathcal{O}$ with variables the entries of the matrices $X$ and $(B_1|B_2)$ respectively. We also write $\wedge^2 (B_1 \brokenvert B_2)$ for the $2\times2$ minors of $(B_1|B_2)$ and $J_m$ for the unit antidiagonal matrix of size $m$,
$$ J_m := \begin{pmatrix} 
          &  & 1  \\
          & \iddots  & \\
         1 &  &   \\
         
    \end{pmatrix}. $$
In the introduction, we state
our results in the case that $d$ and $l$ have the same parity,  so $d=2n$ and $l=2r$, or $d=2n+1$ and $l=2r+1$. The results when $d$ and $l$ have 
different parity are a bit more involved to state; we refer the reader to Theorem \ref{mthm2} and Section \ref{remaining}.

\begin{Theorem}\label{thm1.0}
Suppose that $d$ and $l$ have the same parity. Then an affine chart of the local model $\Mloc(\Lambda)$ around the worst point is given by $ U_{d, l}=\Spec(R)$ where $R$ is the quotient ring
\[
R=\mathcal{O}[B_1|B_2] /\left(\wedge^2 (B_1 \brokenvert B_2) , Tr( B_2 J_{n-r} B_1^{t}J_{l}) + 2 \pi \right).
\]
\end{Theorem}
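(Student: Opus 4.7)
The approach is to use the standard representation $i: {\rm SO}(V) \hookrightarrow {\rm GL}(V)$ to exhibit $\Mloc(\Lambda)$ as a closed subscheme of a Grassmannian-type scheme over $\Spec(\mathcal{O})$, and then describe an affine chart around the worst point by explicit matrix coordinates. The worst point corresponds to a specific $k$-point parametrizing the most degenerate isotropic lattice in the special fiber. Near it, I would parametrize the lattice $\mathcal{L}$ as the column span of a matrix $X$ in the block form given in the setup, with a suitable block normalized to the identity so as to identify the affine neighborhood with an affine space in the remaining entries $E_i, O_j, A, B_1, B_2$.

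The local model is then cut out inside this affine chart by two families of conditions. The first is the isotropy condition $\mathcal{L} \subset \mathcal{L}^\vee$ coming from the symmetric form on $V$, which reads as a matrix equation of the shape $X^{t} H X = \pi H'$ for an appropriate Gram matrix $H$ in a basis adapted to the block decomposition. Reading off blocks of this equation should express $E_1,\ldots,E_4$ and $O_1, O_2$, together with the symmetric part of $A$, in terms of $B_1$ and $B_2$, while the scalar equation coming from the trace component of $A$ collapses to the relation $\mathrm{Tr}(B_2 J_{n-r} B_1^{t} J_l) + 2\pi = 0$. The second family of conditions is the wedge/spin condition that cuts out the flat closure of the generic minuscule orbit inside the naive local model, and in these coordinates it should translate precisely to the vanishing of all $2\times 2$ minors of the stacked matrix $(B_1 | B_2)$.

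The two parity cases $(d,l) = (\mathrm{even},\mathrm{even})$ and $(d,l) = (\mathrm{odd},\mathrm{odd})$ can be handled in parallel, but do require separate bookkeeping in writing down $H$ (split versus quasi-split symmetric form) and the resulting block equations; this is why the stated theorem treats only the equal-parity cases and the mixed-parity analogues are deferred to Theorem \ref{mthm2} and Section \ref{remaining}.

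The main obstacle, I expect, is justifying the wedge condition. Imposing isotropy and solving for the auxiliary blocks is essentially linear algebra, but showing that the extra equations needed to pass from the naive scheme to the Pappas--Zhu flat closure are exactly $\wedge^2(B_1 | B_2) = 0$ is more subtle. One route is a direct computation in the global affine Grassmannian of ${\rm SO}(V)$, using that the closure of the minuscule orbit of $\mu$ is cut out by explicit wedge/spin conditions on the standard representation, in the spirit of Pappas--Rapoport. An alternative, a posteriori verification is possible once one knows that $R$ is $\mathcal{O}$-flat with the correct generic fiber and a reduced special fiber of the expected dimension (the latter following from classical theory of $2\times 2$ determinantal varieties): then $\Spec(R)$ must agree with the affine chart of $\Mloc(\Lambda)$ around the worst point, since both are flat $\mathcal{O}$-schemes coinciding on the generic fiber.
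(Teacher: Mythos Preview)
Your alternative route at the end --- showing that $\Spec(R)$ is $\mathcal{O}$-flat with the correct generic fiber, hence equal to the flat closure that defines the local model --- is in fact the paper's strategy, and it is the part of your proposal that survives. But the main body of your proposal misidentifies where the equations come from, and this matters for carrying out the argument.

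First, the matrix $X$ is not parametrizing a lattice as a column span in a Grassmannian; it is the matrix of multiplication by $(u-\pi)$ on $\mathcal{F}' \subset \mathbb{L}/(u-\pi)^2\mathbb{L}$, after embedding $\Mloc(\Lambda)$ in a Beilinson--Drinfeld Grassmannian for the group scheme over $\mathcal{O}[u]$. Second, the vanishing of $2\times 2$ minors is \emph{not} a spin or wedge condition added to cut out the flat closure: the condition $\wedge^2 X = 0$ (on the full $d\times d$ matrix, not just $(B_1\,|\,B_2)$) already belongs to the \emph{naive} ideal $I^{\mathrm{naive}}$, arising from the requirement $\operatorname{rank}\bigl((u-\pi)\,|\,\mathcal{F}'\bigr)\le 1$. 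The naive scheme with $X^2=0$, $\wedge^2 X=0$, and the two isotropy equations (there are two, involving $S_0$ and $S_1$ separately, not a single $X^tHX=\pi H'$) is still not flat. The additional relations one adds to obtain the flat closure are \emph{trace-type} relations: $Tr(X)$, $Tr(A)+2\pi$, $AJ_l-J_lA^t$, and $B_2J_{n-r}B_1^t-AJ_l$.

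The substance of the proof is then an elementary but lengthy simplification (Theorem~\ref{simplifyProp}) showing that $I^{\mathrm{naive}}+I^{\mathrm{add}}$ reduces, after eliminating $E_i,O_j,A$, to exactly $\bigl(\wedge^2(B_1\,|\,B_2),\ Tr(B_2J_{n-r}B_1^tJ_l)+2\pi\bigr)$. Only \emph{after} this simplification does one invoke Cohen--Macaulayness of determinantal rings to get flatness, and hence the identification with the affine chart. So your ``a posteriori'' paragraph is the right shape, but it cannot be applied until one has actually produced $R$ by this reduction; the equations do not arise in the order you suggest.
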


Let us mention here that, for $l_* \neq 0$, none of these models are smooth or semi-stable (as follows from \cite[Theorems  5.1, 5.6]{PRH}).

Below we discuss how we derive the equations of the above theorem and then we give the main ingredients of the proof. We write $ S_0, S_1$ for the antidiagonal matrices of size $ d $,
$$S_0 := \begin{pmatrix} 
          &  & 1^{(n-r)}  \\
          & 0^{(l)}  & \\
         1^{(n-r)} &  &   \\
         
    \end{pmatrix},\quad     S_1:=\begin{pmatrix} 
          &  & 0^{(n-r)}  \\
          & 1^{(l)}  & \\
         0^{(n-r)} &  &   \\
         
    \end{pmatrix}
$$
and we define the ideal
$$
I^{\text{naive}}= \left( X^2,\, \wedge^2 X, \,  X^{t} S_0 X - 2 \pi \left(S_0  + \pi S_1 \right)X,\, X^{t} S_1 X  + 2 (S_0  + \pi S_1) X    \right).  
$$ 
Our first step is to show that an affine chart of the PZ local model around the worst point is given as a closed subscheme of the quotient ${\rm M}=\mathcal{O}[X]/I^{\text{naive}}$. We do this in Section 3. 

This $\mathcal{O}$-flat closed subscheme is obtained by adding certain
equations to $I^{\text{naive}}$: Set
\[
I = I^{\text{naive}} + I^{\text{add}}
\]
where 
\[
I^{\text{add}} = \left(Tr(X), \, Tr(A) + 2 \pi, \, B_2 J_{n-r} B_1^{t}- A J_{l} \right) .
\]
We show that $I$ cuts out the $\mathcal{O}$-flat $\Mloc(\Lambda)\cap {\rm M}$, which is an open affine subscheme of $\Mloc(\Lambda)$. By an involved but completely elementary manipulation of the relations describing the ideal $I$ we prove that:

\begin{Theorem}\label{simplifyProp} Suppose that $d$ and $l$ have the same parity. 
The quotient $\mathcal{O}[X]/I$ is isomorphic to $ \mathcal{O}[B_1|B_2]/ \left(\wedge^2 (B_1 \brokenvert B_2) , Tr( B_2 J_{n-r} B_1^{t}J_{l}) + 2 \pi \right).$
 \end{Theorem}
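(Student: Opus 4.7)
The plan is to exhibit mutually inverse ring homomorphisms between the two quotient rings, with the bulk of the work going into constructing the reverse map.

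For the easy direction, consider the inclusion $\mathcal{O}[B_1|B_2] \hookrightarrow \mathcal{O}[X]$ followed by the quotient by $I$. The generators $\wedge^2(B_1|B_2)$ of the target ideal are a subset of $\wedge^2 X \subset I^{\text{naive}}$. Moreover, the generator $B_2 J_{n-r} B_1^{t} - AJ_l \in I^{\text{add}}$ together with $J_l^2 = \mathrm{Id}$ shows $A \equiv B_2 J_{n-r} B_1^{t} J_l \pmod{I}$, so the remaining generator $\mathrm{Tr}(A) + 2\pi \in I^{\text{add}}$ becomes $\mathrm{Tr}(B_2 J_{n-r} B_1^{t} J_l) + 2\pi \pmod{I}$. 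Hence a well-defined surjection $\phi$ from the target quotient onto $\mathcal{O}[X]/I$ exists.

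For the reverse direction, I aim to show that, modulo $I$, every entry of the auxiliary blocks $A, E_1, E_2, E_3, E_4, O_1, O_2$ can be written as an explicit polynomial in the entries of $B_1, B_2$. For $A$ the formula is already in hand, and substituting it into $AJ_l - J_l A^{t}$ yields the ``symmetry'' identity $B_2 J_{n-r} B_1^{t} = B_1 J_{n-r} B_2^{t}$, which I expect to fall out of $\wedge^2(B_1|B_2) = 0$ by pairing antidiagonal indices. To handle $E_i$ and $O_j$, I will exploit in cascade: (i) the rank-$\le 1$ condition $\wedge^2 X = 0$, which forces every $2 \times 2$ minor mixing an $E$- or $O$-entry with a $B$-entry to vanish; (ii) the nine block equations of $X^2 = 0$, which, after substituting the known value of $A$, force specific quadratic expressions in $B_1, B_2$ for the $E$- and $O$-entries; (iii) the quadratic form conditions $X^{t} S_0 X - 2\pi(S_0 + \pi S_1)X = 0$ and $X^{t} S_1 X + 2(S_0 + \pi S_1)X = 0$, expanded blockwise against the antidiagonal structure of $S_0, S_1$; and (iv) the trace constraints $\mathrm{Tr}(X) = 0$ and $\mathrm{Tr}(A) + 2\pi = 0$, which together fix $\mathrm{Tr}(E_1) + \mathrm{Tr}(E_4)$. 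Having derived these formulas, I define a candidate map $\psi$ by substituting them, then verify that every generator of $I$ lies in $\ker \psi$, each verification reducing to an identity that follows from the vanishing of $2 \times 2$ minors of $(B_1|B_2)$ or from the single scalar relation $\mathrm{Tr}(B_2 J_{n-r} B_1^{t} J_l) + 2\pi = 0$. Finally I check $\phi \circ \psi = \mathrm{id}$ and $\psi \circ \phi = \mathrm{id}$.

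The main obstacle is the combinatorial bookkeeping rather than any conceptual difficulty: six auxiliary blocks are linked to $B_1, B_2$ only through a web of quadratic relations (from $\wedge^2 X$, $X^2$, and the two symmetric form conditions), and extracting explicit polynomial expressions for their entries while ensuring that no relations on $B_1, B_2$ arise beyond the two claimed requires careful exploitation of the antidiagonal symmetries of $J_m$, $S_0$, $S_1$ and of the condition $AJ_l = J_l A^{t}$. A subsidiary difficulty is that although $\wedge^2 X = 0$ morally says ``$X = uv^{t}$,'' over the polynomial ring $\mathcal{O}[X]$ one cannot literally divide, so the elimination must be carried out at the level of ideal membership using only the listed generators; verifying that the nonlinear substitutions for $E_i, O_j$ are globally compatible (and not merely locally on a chart where some $B$-entry is invertible) is what makes the manipulation ``involved but elementary'' as advertised.
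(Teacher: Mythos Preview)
Your overall strategy---express the auxiliary blocks $A, E_i, O_j$ as polynomials in the entries of $B_1, B_2$ modulo $I$, thereby producing an inverse to the natural map $\mathcal{O}[B_1|B_2]/I'' \to \mathcal{O}[X]/I$---is exactly the paper's approach. However, your cascade is mis-ordered, and step (ii) as stated will fail: the block equations of $X^2 = 0$ are homogeneous quadratic in all of the blocks simultaneously and do \emph{not} force explicit polynomial expressions for the $E_i, O_j$ in terms of $B_1, B_2$ (even after substituting $A$). The paper's key observation is instead that in the relation $X^{t} S_1 X + 2(S_0 + \pi S_1) X = 0$, the linear term $2 S_0 X$ contributes the blocks $E_i, O_j$ \emph{linearly} with invertible coefficient $2$ (here $p \neq 2$ is used), while $X^{t} S_1 X$ restricted to the outer block-rows involves only $B_1, B_2, A$. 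This immediately yields the six global polynomial formulas $E_1 = -\tfrac{1}{2} J_{n-r} B_2^{t} J_{l} B_1$, $O_1 = -\tfrac{1}{2} J_{n-r} B_2^{t} J_{l} A$, etc.; your worry about ``local versus global'' compatibility therefore evaporates, since no division by a $B$-entry is ever needed.

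Once these formulas are in hand, the paper proceeds essentially as you anticipate: one verifies that the remaining generators of $I$ (namely $X^2$, the $S_0$-equation, $A J_{l} - J_{l} A^{t}$, $\mathrm{Tr}(X)$, and all of $\wedge^2 X$ beyond $\wedge^2(B_1|B_2)$) become redundant, each reduction using only $\wedge^2(B_1|B_2)$, the single trace relation, and elementary symmetry identities such as $B_1 J_{n-r} B_2^{t} = (B_1 J_{n-r} B_2^{t})^{t}$ coming from the minors. So the fix to your plan is simply to promote your step (iii) to the front and use it to \emph{derive} the substitutions, then demote $X^2 = 0$ and the $S_0$-equation to relations that must be \emph{verified}. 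One further minor point: your map $\phi$ is well-defined at the ``easy'' stage but is not yet a surjection; surjectivity only follows once the formulas expressing $E_i, O_j, A$ in terms of $B_1, B_2$ modulo $I$ have been established.
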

 
It essentially remains to show that $U_{d, l}=\Spec(R)$ is flat over $\CO$. By definition, $U_{d, l}$ is a hypersurface in the determinantal scheme $D=\Spec(\CO[B_1|B_2]/(\wedge^2 (B_1|B_2))$. Since $D$ is Cohen-Macaulay, see \cite[Remark  2.12]{DR}, we can easily deduce that $U_{d, l}$ and $\overline U_{d, l}$ are also Cohen-Macaulay. Flatness of $ U_{d, l}$ follows, see Section \ref{flatness}. Theorem \ref{thm1.0} quickly follows together with the (essentially equivalent) statement:

\begin{Theorem}\label{thm1.1} Suppose that $d$ and $l$ have the same parity. 
An affine chart of the local model $\Mloc(\Lambda)$ around the worst point is given by $ \Spec(\mathcal{O}[X]/I)$, where $I$ is as above.
\end{Theorem}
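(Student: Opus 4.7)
The plan is to identify $\Spec(\mathcal{O}[X]/I)$ with the scheme-theoretic (flat) closure inside $\Spec(M)$ of its own generic fiber, where $M=\mathcal{O}[X]/I^{\text{naive}}$. By Section~3 there is a closed embedding of an affine chart of $\Mloc(\Lambda)$ around the worst point into $\Spec(M)$, and by the construction of the Pappas--Zhu local model this chart is precisely the flat closure of its generic fiber. Since $I\supseteq I^{\text{naive}}$ I have a surjection $M\twoheadrightarrow \mathcal{O}[X]/I$, and it suffices to verify two things: (a) every generator of $I^{\text{add}}$ becomes a consequence of $I^{\text{naive}}$ after inverting $\pi$; and (b) $\mathcal{O}[X]/I$ is flat over $\mathcal{O}$. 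Granted (a) and (b), $\mathcal{O}[X]/I$ is a $\pi$-torsion-free quotient of $M$ with the same generic fiber, hence must equal the flat closure.

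For (a), I would invert $\pi$ and work block by block in the given decomposition of $X$ into the submatrices $E_i$, $O_j$, $B_\ell$, $A$. The key observation is that the third generator of $I^{\text{naive}}$ and its transpose together imply, after dividing by $2\pi$, the identity $X^{t}(S_{0}+\pi S_{1})=(S_{0}+\pi S_{1})X$, so $X$ is self-adjoint over $\breve F$ for the nondegenerate symmetric form $S_{0}+\pi S_{1}$. Reading this identity block by block yields $AJ_{l}=J_{l}A^{t}$ from the central block and determines $O_{1}$, $O_{2}$ in terms of $B_{2}$, $B_{1}$ up to a factor of $\pi$. Substituting these expressions into the $(2,2)$-block of $X^{2}=0$ and of the third generator of $I^{\text{naive}}$ then produces $B_{2}J_{n-r}B_{1}^{t}=AJ_{l}$. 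Finally, the tracelessness $Tr(X)=0$ follows from the fact that $X^{2}=0$ and $\wedge^{2}X=0$ together force $X$ to be a rank $\leq 1$ nilpotent, and combining the rank-one identity $A^{2}=Tr(A)\cdot A$ with the by-product $A(A+2\pi)=0$ of the same substitution yields $Tr(A)+2\pi=0$.

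For (b), I would invoke Theorem \ref{simplifyProp}, which identifies $\mathcal{O}[X]/I$ with the ring $R$ of Theorem \ref{thm1.0}. The ring $\mathcal{O}[B_{1}\,|\,B_{2}]/(\wedge^{2}(B_{1}\,|\,B_{2}))$ is the classical generic determinantal ring of $2\times(2n-2r)$ matrices of rank at most one; it is Cohen--Macaulay and $\mathcal{O}$-flat of the expected dimension, cf.\ \cite[Remark 2.12]{DR}. The extra generator $Tr(B_{2}J_{n-r}B_{1}^{t}J_{l})+2\pi$ is a non-zero-divisor on this ring, which can be verified by a Krull-dimension count after reduction modulo $\pi$. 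Consequently $R$ is Cohen--Macaulay of dimension one less, and $\mathcal{O}$-flatness of $R$ follows from the fact that $\pi$ remains a non-zero-divisor in $R$.

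The main obstacle is step (a): although conceptually transparent (the relations in $I^{\text{add}}$ encode tracelessness, rank-one-ness, and self-adjointness of $X$ for $S_{0}+\pi S_{1}$ on the generic fiber), carrying out the bookkeeping uniformly across the nine blocks of $X$ and tracking the normalizations involving $J_{n-r}$ and $J_{l}$ requires care, especially in the same-parity setting assumed here. Step (b) is essentially automatic once Theorem \ref{simplifyProp} is in hand, and mirrors the elementary Cohen--Macaulay argument sketched in the introduction.
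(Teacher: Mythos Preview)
Your overall strategy and step (b) match the paper's approach (Sections~\ref{sect4}--\ref{flatness}) exactly.

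Step (a), however, has a gap. Every monomial appearing in a generator of $I^{\text{naive}}$ involves at least one entry of $X$, so $X=0$ is an $\breve F$-point of $V(I^{\text{naive}})$; but $Tr(A)+2\pi$ takes the value $2\pi\neq 0$ there. Hence $Tr(A)+2\pi\notin I^{\text{naive}}\otimes\breve F$ and $V(I)_{\breve F}\subsetneq V(I^{\text{naive}})_{\breve F}$. Your derivation produces only $(Tr(A)+2\pi)A=0$ in $M\otimes\breve F$; the passage to $Tr(A)+2\pi=0$ fails because $M\otimes\breve F$ is not a domain. (The paper's own discussion in Section~3 glosses over the same issue when it asserts that $\mathcal{U}$ and $\mathcal{U}^{\text{naive}}$ have the same generic fiber.)

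The repair is short. First strengthen your computation: the $(2,1)$- and $(2,3)$-blocks of the fourth naive relation together with the rank-one identity $AB_\ell=Tr(A)B_\ell$ give $(Tr(A)+2\pi)B_\ell=0$ over $\breve F$, and the relations expressing $E_i,O_j$ in terms of $B_1,B_2$ then yield $(Tr(A)+2\pi)\,x_{ij}=0$ for every entry of $X$. Now replace the target of (a) by $V(I)_{\breve F}=\mathcal{U}_{\breve F}$. Since $\mathcal{U}_{\breve F}$ is an open chart of the smooth irreducible orbit $X_\mu$, its coordinate ring is a domain in which not all $x_{ij}$ vanish (the identity coset $X=0$ does not lie in the $\mu$-orbit), so your identities force $Tr(A)+2\pi=0$ and the remaining generators of $I^{\text{add}}$ to vanish on $\mathcal{U}_{\breve F}$. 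Taking flat closures inside $V(I^{\text{naive}})$ gives $\mathcal{U}\subseteq V(I)$. For the reverse inclusion, Theorem~\ref{simplifyProp} exhibits $V(I)_{\breve F}$ as a hypersurface in the rank-$\leq 1$ determinantal variety that avoids the cone point (since $Tr+2\pi\neq 0$ at the origin), hence is integral of dimension $d-2$; the $(d-2)$-dimensional closed subscheme $\mathcal{U}_{\breve F}$ must then coincide with it.
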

 
 Using Theorem \ref{thm1.0} and the reducedness of the fibers of PZ local models (see \cite{PZ}) we have that:
\begin{Theorem}\label{intro-s.f.r}
The special fiber of $U_{d,l}$ is reduced.
\end{Theorem}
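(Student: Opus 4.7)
The proof should be a short consequence of the preceding machinery. My plan is as follows. By Theorem \ref{thm1.0}, $U_{d,l}$ is isomorphic to a Zariski-open neighborhood of the worst point in the PZ local model $\Mloc(\Lambda)$. Since reducedness is a local property and passes to open subschemes, it suffices to know the analogous statement for the full local model $\Mloc(\Lambda)$ over $k$.

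The reducedness of the geometric special fiber of $\Mloc(\Lambda)$ is precisely the theorem established by Pappas--Zhu in \cite{PZ}, whose proof invokes Zhu's verification of the Pappas--Rapoport coherence conjecture \cite{Z}; alternatively, one can cite the more recent general result of Haines--Richarz \cite{HR}. Combining this with the previous paragraph yields the claim immediately, and no further manipulation of the equations cutting out $R$ is needed. The main ``obstacle'' has in fact already been cleared upstream: the hard work was establishing Theorem \ref{thm1.0}, which identifies our concrete hypersurface-in-a-determinantal-scheme with an honest open chart of $\Mloc(\Lambda)$.

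If one preferred a proof that avoids appealing to the coherence conjecture, the natural alternative is Serre's criterion $R_0 + S_1$. The $S_1$ condition follows from the Cohen--Macaulayness of $\overline{U}_{d,l}$, already deduced from the Cohen--Macaulay determinantal ring $\CO[B_1|B_2]/(\wedge^2(B_1|B_2))$ together with the fact that the extra equation $Tr(B_2 J_{n-r} B_1^{t} J_{l}) + 2\pi$ cuts out a hypersurface in it. One would then be reduced to checking generic reducedness $R_0$ of the special fiber, which could be verified by producing, on each irreducible component, a point where the Jacobian of $(\wedge^2(B_1|B_2),\, Tr(B_2 J_{n-r} B_1^{t} J_{l}))$ over $k$ attains the expected rank. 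In this alternative route, the genuine obstacle is enumerating the irreducible components and exhibiting an explicit smooth point on each; everything else is routine. I would follow the first approach, since the PZ reducedness input is already in hand.
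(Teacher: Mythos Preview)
Your proposal is correct and matches the paper's own treatment: the paper deduces Theorem~\ref{intro-s.f.r} exactly by combining Theorem~\ref{thm1.0} (identifying $U_{d,l}$ with an open chart of $\Mloc(\Lambda)$) with the Pappas--Zhu reducedness result \cite{PZ}, and then, like your alternative, gives in Section~\ref{special fiber} an independent argument via Cohen--Macaulayness of $\overline U_{d,l}$ plus generic reducedness (checked there by explicit localization at the minimal primes rather than a Jacobian computation, but this is the same $R_0+S_1$ strategy).
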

In Section \ref{special fiber} we give an independent proof of this result by using that the special fiber $\overline U_{d, l}$ is Cohen-Macaulay and generically reduced. 

In the course of proving the reducedness of $\overline U_{d, l}$, we also determine the number of its irreducible components. We find that when $2<l_*$, where $l_*={\rm min}(l, d-l)$ and $l$ is the distance of our lattice to its dual, the special fiber $\overline U_{d, l}$ has two irreducible components. When  $l_*=2$, $\overline U_{d, l}$ has three irreducible components. In fact, we explicitly describe the equations defining the irreducible components of the special fiber.

Similar arguments extend  to the case that $d$ and $l$ have different parity. We give the corresponding hypersurface in a determinantal scheme and the equations of irreducible components of the special fiber
in all cases.

 \smallskip

{\bf Notation:} Let $F$ be a finite extension of $\mathbb{Q}_p$. We assume that $p\neq 2$. Denote with $\CO_F$ the ring of integers of $F$ and let $\pi$ be a uniformizer of $\CO_F$. We denote by $\breve F$ the completion of the maximal unramified extension of $F$  in an algebraic closure $ \bar F$. We denote by $\kappa_F$ the residue field of $F$ and by $k$ the algebraic closure of $\kappa_F$ which is also the residue field of $\breve F$. We also set $\mathcal{O} := \CO_{\breve F}$ for the ring of integers of $\breve F.$
\smallskip

{\bf Acknowledgements:}  I would like to thank G. Pappas for introducing me into this area of mathematics and
 for his patient help during my work on this article. I also thank the referee for their work.

\section{Preliminaries}

\subsection{Local models}
We now recall the construction of the Pappas-Zhu local models. For a more detailed presentation we refer the reader to \cite{P} and \cite{PZ}.

 Let $G$ be a connected reductive group over $F$. Assume that $G$ splits over a tamely ramified extension of $F$. Let $ \{ \mu \}$ be a conjugacy class of a geometric cocharacter $ \mu : \mathbb{G}_{m \bar F} \rightarrow G_{\bar{F}}$ and assume that $\mu$ is minuscule. Define $K$ to be the parahoric subgroup of $G(F)$, which is the connected stabilizer of some point $x$ in the (extended) Bruhat-Tits building $\mathbb{B}(G,F)$ of $G(F)$. Define $E$ to be the extension of $F$ which is the field of definition of the conjugacy class $ \{ \mu \}$ (the reflex field).
 
In \cite{PZ}, the authors construct an affine group scheme $\mathcal{G}$ which is smooth over $\Spec(O_F[u])$ and which, among other properties, satisfies:
\begin{enumerate}
    \item The base change of $\mathcal{G}$ by $\Spec(O_F) \rightarrow \Spec(O_F[u]) = \mathbb{A}^1_{O_F}$ given by $u \rightarrow \pi$ is the Bruhat-Tits group scheme which corresponds to the parahoric subgroup $K$ (see \cite{BTII}).  
    \item The group scheme $\mathcal{G}_{| O_F [u, u^{-1}]}$ is reductive.
\end{enumerate}

Next, they consider the global (“Beilinson-Drinfeld”) affine Grassmannian  
\[
{\rm Aff}_{\mathcal{G},\mathbb{A}^1_{O_F}} \rightarrow \mathbb{A}^1_{O_F}
\]
 given by $\mathcal{G}$, which is an ind-projective ind-scheme. By base changing $u \rightarrow \pi$, they obtain an equivariant isomorphism 
$$ \text{Aff}_G \xrightarrow{\sim} \text{Aff}_{\mathcal{G},\mathbb{A}^1_{O_F}}\times_{\mathbb{A}^1_{O_F}} \Spec(F)$$ 
where $\text{Aff}_G$ is the affine Grassmannian of $G$; this is the ind-projective ind-scheme over $\Spec(F)$ that represents the fpqc sheaf associated to 
$$R \rightarrow G(R((t)))/G(R[[t]]),$$
where $R$ is an $F$-algebra (see also \cite{PR}). (The above isomorphism is induced by $t \rightarrow u-\pi$, see \cite{P}.)

The cocharacter $\mu$ gives an $\bar{F}[t, t^{-1}]$-valued point of $G$ and thus $\mu$ gives an $\bar{F}((t))$-valued point $ \mu(t)$ of $G$. This gives a $ \bar{F}$-point $ [\mu(t)]= \mu(t) G ( \bar{F}[[t]])$ of $\text{Aff}_G $.
Since $\mu$ is minuscule and $ \{\mu \} $ is defined over the reflex field $E$ the orbit $$ G(\bar{F}[[t]])[\mu(t)] \subset \text{Aff}_G (\bar{F}),$$
is equal to the set of $ \bar{F}$-points of a closed subvariety $X_{\mu}$ of $\text{Aff}_{G,E}= \text{Aff}_G \otimes_{F }E .$

\begin{Definition}
Define the local model $\Mloc_K(G, \{\mu\})$ to be the flat projective scheme over $\Spec(O_E)$ given by the reduced Zariski closure of the image of $$X_{\mu} \subset \text{Aff}_G \xrightarrow{\sim} \text{Aff}_{\mathcal{G},\mathbb{A}^1_{O_F}}\times_{\mathbb{A}^1_{O_F}} \Spec(E)$$ in the ind-scheme $\text{Aff}_{\mathcal{G},\mathbb{A}^1_{O_F}}\times_{\mathbb{A}^1_{O_F}} \Spec(O_E)$.
\end{Definition}

The PZ local models have the following property (see \cite[Prop. 2.14]{PRH}).
\begin{Proposition}
 If $F'/F$ is a finite unramified extension, then 
 $$\Mloc_K(G, \{\mu\}) \otimes_{O_E}O_{E'}\xrightarrow{\sim } \Mloc_{K'}(G\otimes_FF',\{\mu\otimes_FF'\}). $$
Note that here the reflex field $E'$ of $(G\otimes_FF', \{\mu\otimes_FF'\})$ is the join of $E$ and $F'$. Also, $K'$ is the parahoric subgroup of $G\otimes_FF'$ with $ K = K' \cap G $.\qed
\end{Proposition}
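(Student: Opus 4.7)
The plan is to trace through the definition of $\Mloc_K(G,\{\mu\})$ step by step and verify that every ingredient of the construction commutes with the unramified base change $O_F \to O_{F'}$, and correspondingly $O_E \to O_{E'}$.

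First I would verify that the smooth $O_F[t]$-group scheme $\mathcal G$ built in \cite{PZ} is compatible with unramified base change, i.e.\ that $\mathcal G\otimes_{O_F[t]}O_{F'}[t]$ coincides with the analogous group scheme $\mathcal G'$ attached to $(G\otimes_F F',\{\mu\otimes_F F'\},K')$. Because $F'/F$ is unramified, Bruhat--Tits theory behaves well: the building $\mathbb B(G,F)$ embeds into $\mathbb B(G\otimes_F F',F')$, and the compatibility $K=K'\cap G(F)$ (which comes from taking the same point $x$) forces the connected stabilizer group schemes to agree after base change. Since the construction of $\mathcal G$ in \cite{PZ} only depends on the Bruhat--Tits data, these group schemes are identified.

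Next I would observe that the global affine Grassmannian ${\rm Aff}_{\mathcal G,\mathbb A^1_{O_F}}$ is defined representably as a functor on $O_F[t]$-algebras, and hence automatically satisfies
\[
{\rm Aff}_{\mathcal G,\mathbb A^1_{O_F}}\times_{\Spec O_F}\Spec O_{F'}\;\cong\;{\rm Aff}_{\mathcal G',\mathbb A^1_{O_{F'}}}.
\]
Under this identification the $\bar{\mathbb Q}_p$-point $[\mu(t)]$ (and its $G(\bar{\mathbb Q}_p[[t]])$-orbit) is the same on either side, so the closed subvariety $X_\mu\subset{\rm Aff}_{G,E}$ base changes to $X_{\mu\otimes_F F'}\subset{\rm Aff}_{G\otimes_F F',E'}$; here the reflex field of $(G\otimes_F F',\{\mu\otimes_F F'\})$ is exactly the compositum $E'=E\cdot F'$ because the conjugacy class is the same and only the base field enlarges.

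Finally I would take reduced Zariski closures on both sides in the corresponding ind-schemes over $O_E$ and $O_{E'}$. The subtle point — and really the only nontrivial part — is that taking the reduced Zariski closure commutes with the base change $O_E\to O_{E'}$. This is where it matters that $F'/F$ (and hence $E'/E$) is unramified: the morphism $\Spec O_{E'}\to\Spec O_E$ is étale, and étale base change preserves reducedness, so the reduced closure upstairs is obtained by pulling back the reduced closure downstairs. This yields the required isomorphism $\Mloc_K(G,\{\mu\})\otimes_{O_E}O_{E'}\xrightarrow{\sim}\Mloc_{K'}(G\otimes_F F',\{\mu\otimes_F F'\})$. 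The main obstacle, as just indicated, is the compatibility of the reduced structure with base change; outside of the unramified setting this genuinely can fail, so the hypothesis on $F'/F$ is used in an essential way.
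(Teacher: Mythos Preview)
The paper does not actually prove this proposition: it is stated with a \qed and a citation to \cite[Prop.~2.13]{PRH}, so there is no argument in the paper to compare against. Your sketch is the right one and is essentially what is behind the cited reference: one checks that the Pappas--Zhu group scheme $\mathcal G$ over $O_F[t]$, the global affine Grassmannian, and the Schubert variety $X_\mu$ all commute with the unramified base change, and then uses that $O_{E'}/O_E$ is \'etale (since $E'=E\cdot F'$ with $F'/F$ unramified) so that taking the reduced Zariski closure commutes with pullback. You have correctly isolated the only genuine issue, namely that flat base change commutes with scheme-theoretic closure while \'etale base change is needed to preserve the reduced structure; nothing further is required.
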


The above proposition allows us to base change to an unramified extension $F'$ over $F$. This will play a crucial role in the proof of our main theorems.

\subsection{Quadratic forms}
Let $V$ be an $F$-vector space with dimension $d= 2n$ or $2n+1$ equipped with a non-degenerate symmetric $F$-bilinear form $\langle \; , \; \rangle$.
It follows from the classification of quadratic forms over local fields \cite{Ger} that after passing to a sufficiently big unramified extension $F'$ of $F$, the base change of $ ( V , \langle \; , \; \rangle)$ to $F'$ affords a basis as in one of the following cases:

\begin{enumerate}
    \item \textbf{Split form:} there is a basis $f_i$ with the following relations:\\ $\langle f_i , f_{d+1-j} \rangle  = \delta_{ij} $, $\forall i,j \in \{ 1, \ldots , d\}$.
    \smallskip
    
    \item \textbf{Quasi-split form (for} $\mathbf{d=2n}$\textbf{):} there is a basis $f_i $ with the relations: $ \langle f_i, f_{d+1-j}\rangle=\delta_{ij}, $ for $i$, $j\neq n, n+1$, $  \langle f_n, f_n \rangle=\pi$, $\langle f_{n+1}, f_{n+1} \rangle =1$, $\langle f_{n}, f_{n+1} \rangle=0.$
    \smallskip

    \item \textbf{Quasi-split form (for} $\mathbf{d=2n+1}$\textbf{):} there is a basis $f_i $ with the relations: $ \langle f_i, f_{d+1-j} \rangle =\delta_{ij},$ for $i$, $j\neq  n+1$, $\langle f_{n+1}, f_{n+1} \rangle =\pi.$

\end{enumerate}

\section{An Affine Chart}
\subsection{Normal forms of quadric lattices}\label{Normalforms}

Let $V$ be an $F$-vector space with dimension $d= 2n$ or $2n+1$ equipped with a non-degenerate symmetric $F$-bilinear form $\langle \; , \; \rangle$. We assume that $d\geq 5$. For all the cases below we take the minuscule coweight $ \mu : \mathbb{G}_m \rightarrow SO(V) $ to be given by $ \mu(t) = \text{diag} ( t^{-1}, 1,\ldots ,1, t ) $, defined over $F$.

A lattice $\Lambda \subset V$ is called a vertex lattice if $\Lambda \subset \Lambda^{\vee} \subset \pi^{-1} \Lambda $. By $\Lambda^{\vee}$ we denote the dual of $\Lambda $ in $V$: 
$$\Lambda^{\vee} := \{ x\in V | \langle \Lambda , x \rangle  \subset O_F \}.$$
Let $\Lambda$ in $V$ be a vertex lattice. So, $\Lambda \, \subset_l\, \Lambda^{\vee} \, \subset_{l'}\, \pi^{-1} \Lambda$ with $l+l'=d$. Here $l$ (respectively $l'$) is the length $l=\text{lg}( \Lambda^{\vee}/ \Lambda)$ (respectively $l'=\text{lg}(\pi^{-1} \Lambda /\Lambda^{\vee} )$). We assume that $l > 1 $ and $l' > 1 $.

For the following we refer the reader to Rapoport-Zink's book \cite{RZ}, Appendix on Normal forms of lattice chains. More precisely, by \cite[Appendix, Proposition A.21]{RZ}, after an \'etale base change (i.e. an unramified base change) we can find an $O_F$-basis $\{e_i\}$ of $ \Lambda $ with the following property:

For $d=2n$:
\begin{enumerate}
    \item \textbf{Split form:} $\Lambda=\oplus_{i=1}^d O_F\cdot e_i$ 
    with
    \[
    \langle e_i , e_{d+1-j} \rangle\! \!= \delta_{ij}, \ \hbox{\rm for}\, i\not\in [n-r+1, n+r], 
    \]
    \[
    \langle e_i , e_{d+1-j} \rangle\! \!= \pi \delta_{ij}, \ \hbox{\rm for}\, i\in [n-r+1, n+r]. 
    \]
    We have  $\Lambda\, {\subset}_l\, \Lambda^{\vee}$ where $l=2r$.
    
    \item \textbf{Quasi-split form:} $\Lambda=\oplus_{i=1}^d O_F\cdot e_i$ 
    with
    \[
    \langle e_i , e_{d+1-j} \rangle\! \!= \delta_{ij}, \ \hbox{\rm for}\, i \in [1,d]\setminus [n-r,n+r+1 ], 
    \]
    \[
    \langle e_i , e_{d+1-j} \rangle\! \!= \pi \delta_{ij}, \ \hbox{\rm for}\, i\in [n-r,n+r+1 ] \setminus \{n,n+1\},
    \] 
    \[
    \langle e_n ,  e_n \rangle = \pi, \quad \langle e_{n+1} ,  e_{n+1} \rangle = 1, \quad  \langle e_n , e_{n+1} \rangle = 0.
    \]
 We have  $\Lambda\, {\subset}_l\, \Lambda^{\vee}$ where $l=2r+1$.
\end{enumerate}

For $d= 2n+1$:
\begin{enumerate}[resume]
    
      \item \textbf{Split form:} $\Lambda=\oplus_{i=1}^d O_F\cdot e_i$ 
    with
    \[
    \langle e_i , e_{d+1-j} \rangle\! \!= \delta_{ij}, \ \hbox{\rm for}\, i\not\in [n+1-r,n+1+r ] \setminus \{n+1\}, 
    \]
    \[
    \langle e_i , e_{d+1-j} \rangle\! \!= \pi \delta_{ij}, \ \hbox{\rm for}\, i\in [n+1-r,n+1+r ]\setminus \{n+1\}.
    \]
We have  $\Lambda\, {\subset}_l\, \Lambda^{\vee}$ where $l=2r$.
    
     \item \textbf{Quasi-split form:} $\Lambda=\oplus_{i=1}^d O_F\cdot e_i$ 
    with
    \[
    \langle e_i , e_{d+1-j} \rangle\! \!= \delta_{ij}, \ \hbox{\rm for}\, i\not\in [ n+1-r, n+1+r ], 
    \]
    \[
    \langle e_i , e_{d+1-j} \rangle\! \!= \pi \delta_{ij}, \ \hbox{\rm for}\, i\in [ n+1-r, n+1+r ].
    \]
We have  $\Lambda\, {\subset}_l\, \Lambda^{\vee}$ where $l=2r+1$.

\end{enumerate}
 From the above discussion, it follows that we can reduce our problem to the above cases by passing to a sufficiently big unramified extension of $F$. Thus, from now on we will be working over $\breve F.$ Recall that we denote by $\mathcal{O}$ its ring of integers and by $k$ its residue field.

In all cases, we will denote by $S$ the (symmetric) matrix with entries $\langle e_i, e_j \rangle$ where $\{e_i\}$ is the basis above.
We can then write 
\[
S=S_0+\pi S_1
\]
where $S_0$, $S_1$ both have entries only $0$ or $1$. For example, in case (1) we have the anti-diagonal matrices:
\[
S_0 := \begin{pmatrix} 
          &  & 1^{(n-r)}  \\
          & 0^{(2r)}  & \\
         1^{(n-r)} &  &   \\
         
    \end{pmatrix},\quad     S_1:=\begin{pmatrix} 
          &  & 0^{(n-r)}  \\
          & 1^{(2r)}  & \\
         0^{(n-r)} &  &   \\
        \end{pmatrix}.
        \] 
 
\subsection{Lattices over $\mathcal{O}[u]$ and local models}

We can now extend our data to $\mathcal{O}[u,u^{-1}]$. We define $\mathbb{V} = \oplus^{d}_{i=1} \mathcal{O}[u,u^{-1}] \bar{e}_i $ and $\langle \; , \; \rangle \; : \mathbb{V} \times \mathbb{V} \rightarrow  \mathcal{O}[u,u^{-1}] $ a symmetric $\mathcal{O}[u,u^{-1}]$-bilinear form such that the value of $\langle \bar{e}_i , \bar{e}_j \rangle$ is the same as the above for $V$ with the difference that $\pi$ is replaced by $u$. Similarly, we define $\bar{\mu} (t) :  \mathbb{G}_m \rightarrow SO(\mathbb{V}) $ by using the $\{ \bar{e}_i\}$ basis for $\mathbb{V}$.

We also define $\mathbb{L}$ the $\mathcal{O}[u]$-lattice in $\mathbb{V}$ by $\mathbb{L} = \oplus_{i=1}^d \mathcal{O}[u]\cdot \bar{e}_i  $. From the above we see that the base change of $(\mathbb{V}, \mathbb{L},\langle \; , \; \rangle )$ from $\mathcal{O}[u,u^{-1}]$ to $F$ given by $u \mapsto \pi$ is $(V,\Lambda,\langle \; , \; \rangle)$.

Let us now define the local model $\Mloc(\Lambda)= \Mloc_K (SO(V),\{\mu\})$ where $K$ is the parahoric stabilizer of $\Lambda$. We consider the smooth, as in \cite{PZ}, affine group scheme $\underline{\mathcal{G}}$ over $\mathcal{O}[u]$ given by $g \in SO(\mathbb{V}) $ that also preserves $\mathbb{L}$ and $\mathbb{L}^{\vee}$. If we base change by $u \mapsto \pi $ we obtain the Bruhat-Tits group scheme $\mathcal{G}$ of $SO(V) $ which is the stabilizer of the lattice chain $\Lambda \subset \Lambda^{\vee} \subset \pi^{-1} \Lambda$. The corresponding parahoric group scheme is the neutral component $\mathcal{G}^{0}$ of $\mathcal{G}$. The construction of \cite{PZ} produces the group scheme $\underline{\mathcal{G}}^0$ that extends $\mathcal{G}^0$. By construction, there is a group scheme immersion $\underline{\mathcal{G}}^0 \hookrightarrow \underline{\mathcal{G}} $.

The global (“Beilinson-Drinfeld”) affine Grassmannian $\text{Aff}_{\underline{\mathcal{G}},\mathbb{A}^1_{\mathcal{O}}} \rightarrow \Spec(\mathcal{O}[u])$ represents the functor that sends the $\mathcal{O}[u]$-algebra $R$, given by $u\mapsto r$, to the set of projective finitely generated $R[u]$-modules $\mathcal{L}$ of $\mathbb{V}\otimes_{\mathcal{O}} R$ which are locally free such that $(u-r)^{N}\mathbb{L}_R\subset \mathcal{L} \subset (u-r)^{-N}\mathbb{L}_R$ for some $N >>0$ and satisfy 
$$
 \mathcal{L}\,  {\subset\, }_l\, \mathcal{L}^{\vee}\, {\subset\, }_{l'}\, u^{-1} \mathcal{L} 
 $$
  with all graded quotients $R$-locally free and of the indicated rank. Here, we set $\mathbb{L}_R = \mathbb{L} \otimes_{\mathcal{O}} R $.

Consider the $\mathcal{O}$-valued point $[\mathcal{L}(0)]$ given by $\mathcal{L}(0) = \bar{\mu} (u-\pi) \mathbb{L}$. Then, as in the Subsection $2.1$ the local model is the reduced Zariski closure of the orbit $[\mathcal{L}(0)]$ in $ \text{Aff}_{\underline{\mathcal{G}}^0,\mathbb{A}^1_{\mathcal{O}}}\times_{\mathbb{A}^1_{\mathcal{O}}} \Spec(\mathcal{O})$; it inherits an action of the group scheme $ \mathcal{G}^0  = \underline{\mathcal{G}}^0 \otimes_{ \mathcal{O}[u]}\mathcal{O}.$ As in
\cite{PZ}, there is a natural morphism $ \text{Aff}_{\underline{\mathcal{G}}^{0},\mathbb{A}^1_{\mathcal{O}}} \rightarrow \text{Aff}_{\underline{\mathcal{G}},\mathbb{A}^1_{\mathcal{O}}}$
induced by
$\underline{\mathcal{G}}^0 \hookrightarrow \underline{\mathcal{G}} $ which identifies $\Mloc(\Lambda)$ with a closed subscheme of $ \text{Aff}_{\underline{\mathcal{G}},\mathbb{A}^1_{\mathcal{O}}}\times_{\mathbb{A}^1_{\mathcal{O}}} \Spec(\mathcal{O})$ .

By the definition of $\mathcal{L}(0)$ we have
	\[ 
	\xy
	(-21,7)*+{(u-\pi)\mathbb{L} \subset   \mathcal{L}(0)\cap \mathbb{L}\ \ \ \ \ };
	(-4.5,3.5)*+{\rotatebox{-45}{$\,\, \subset\,\,$}};
	(-4.5,10.5)*+{\rotatebox{45}{$\,\, \subset\,\,$}};
	(0,14)*+{\mathbb{L}};
	(0,0)*+{\mathcal{L}(0)};
	(4,3.5)*+{\rotatebox{45}{$\,\, \subset\,\,$}};
	(4,10.5)*+{\rotatebox{-45}{$\,\, \subset\,\,$}};
	(23.5,7)*+{\ \ \ \ \ \mathbb{L}+\mathcal{L}(0) \subset   (u-\pi)^{-1}\mathbb{L},};
	\endxy    
	\]
where the quotients along all slanted inclusions are $\mathcal{O}$-free of rank $1$ (for more details see proof of Proposition \ref{L0}). Let us define M to be the subfunctor of $\text{Aff}_{\underline{\mathcal{G}},\mathbb{A}^1_{\mathcal{O}}}\times_{\mathbb{A}^1_{\mathcal{O}}} \Spec(\mathcal{O})$ that parametrizes all $\mathcal{L}$ such that 
$$ (u-\pi)\mathbb{L }\subset \mathcal{L} \subset (u-\pi)^{-1}\mathbb{L} .$$ 
Then M is represented by a closed subscheme of $\text{Aff}_{\underline{\mathcal{G}},\mathbb{A}^1_{\mathcal{O}}}\times_{\mathbb{A}^1_{\mathcal{O}}} \Spec(\mathcal{O})$ which contains $[\mathcal{L}(0)]$. In that way, $\Mloc(\Lambda)$ is a closed subscheme of M and $\Mloc(\Lambda)$ is the reduced Zariski closure of its generic fiber in M. As in \cite[Proposition 12.7]{PRH}, the elements of $\Mloc(\Lambda)$ have the following properties:

\begin{Proposition}\label{L0}
If $\mathcal{L} \in \Mloc (\Lambda)(R) $, for an $\mathcal{O}$-algebra $R$, then:
\begin{enumerate}
    \item $\mathcal{L}$ is $u$-stable,
    \item $ \mathcal{L}\, {\subset\, }_l\, \mathcal{L}^{\vee} $, and 
    \item \[ 
	\xy
	(-21,7)*+{(u-\pi) \mathbb{L}_R  \subset   \mathcal{L}\cap \mathbb{L}_R \ \ \ \ \ };
	(-4.5,3.5)*+{\rotatebox{-45}{$\,\, \subset\,\,$}};
	(-4.5,10.5)*+{\rotatebox{45}{$\,\, \subset\,\,$}};
	(0,14)*+{\mathbb{L}_R };
	(0,0)*+{\mathcal{L}};
	(4,3.5)*+{\rotatebox{45}{$\,\, \subset\,\,$}};
	(4,10.5)*+{\rotatebox{-45}{$\,\, \subset\,\,$}};
	(23.5,7)*+{\ \ \ \ \ \mathbb{L}_R +\mathcal{L} \subset   (u-\pi)^{-1}\mathbb{L}_R ,};
	\endxy    
	\]
	where the quotients arising from all slanted inclusions are generated as $R$-modules by one element (we say that they have rank $\leq 1$).
\end{enumerate}
\end{Proposition}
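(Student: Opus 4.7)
Properties (1) and (2) follow essentially from the defining conditions of $M$ inside the global affine Grassmannian: any $\mathcal{L}\in M(R)$ is by definition an $R[u]$-submodule of $\mathbb{V}\otimes_{\mathcal{O}}R$, hence automatically $u$-stable (which gives (1)); and the recalled definition of $\text{Aff}_{\underline{\mathcal{G}},\mathbb{A}^1_\mathcal{O}}$ already imposes $\mathcal{L}\subset_l\mathcal{L}^{\vee}$ with $\mathcal{L}^{\vee}/\mathcal{L}$ locally free of the indicated rank $l$ (which gives (2)). The real content is therefore (3), and the plan is to argue by a standard Zariski closure/specialization argument, analogous to \cite[Proposition 12.7]{PRH}: verify that the two rank-$\leq 1$ conditions on the slanted quotients cut out a closed subfunctor of $M$, check that they hold at the base point $[\mathcal{L}(0)]$, observe that they are stable under the action of $\underline{\mathcal{G}}^0$, and finally pass to the reduced Zariski closure.

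For the closedness, I would argue via upper semi-continuity of fiber dimension, or equivalently via the vanishing of the appropriate Fitting ideals: the quotients $Q_{-}:=\mathbb{L}/(\mathcal{L}\cap\mathbb{L})$ and $Q_{+}:=(\mathcal{L}+\mathbb{L})/\mathbb{L}$ are finitely generated $R$-modules, and the locus in $M$ where every fiber of $Q_{\pm}$ has dimension $\leq 1$ is closed on $\Spec R$. Invariance under $\underline{\mathcal{G}}^0$ is immediate: any $g\in\underline{\mathcal{G}}^0(R)$ is $R[u]$-linear and stabilizes $\mathbb{L}$ by construction, so $g\mathcal{L}\cap\mathbb{L}=g(\mathcal{L}\cap\mathbb{L})$ and $g\mathcal{L}+\mathbb{L}=g(\mathcal{L}+\mathbb{L})$, and the ranks of $Q_{\pm}$ do not change. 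At the basepoint, the diagonal form $\bar\mu(u-\pi)=\text{diag}\bigl((u-\pi)^{-1},1,\ldots,1,u-\pi\bigr)$ shows that $\mathcal{L}(0)\cap\mathbb{L}$ is the $\mathcal{O}[u]$-span of $\bar e_1,\bar e_2,\ldots,\bar e_{d-1},(u-\pi)\bar e_d$, so $\mathbb{L}/(\mathcal{L}(0)\cap\mathbb{L})\cong \mathcal{O}[u]\bar e_d/(u-\pi)\mathcal{O}[u]\bar e_d\cong\mathcal{O}\cdot\bar e_d$ is free of rank one; symmetrically $(\mathcal{L}(0)+\mathbb{L})/\mathbb{L}$ is free of rank one, generated by $(u-\pi)^{-1}\bar e_1$.

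Combining these three ingredients, the orbit $\underline{\mathcal{G}}^0\cdot[\mathcal{L}(0)]$ is contained in the closed subscheme of $M$ cut out by the rank-$\leq 1$ conditions on $Q_{\pm}$; hence so is its reduced Zariski closure $\Mloc(\Lambda)$, which is exactly the statement of (3). The main technical point, in my view, is confirming the closedness of the rank-$\leq 1$ conditions in a scheme-theoretic (rather than merely pointwise) sense: since $Q_{\pm}$ is not \emph{a priori} $R$-flat, one has to invoke Fitting ideals of the finitely presented quotient sheaf rather than fiber dimension alone. A minor point of care is that properties (1)--(3) hold on the possibly non-reduced schematic orbit closure inside $M$, and therefore a fortiori on the reduced closure $\Mloc(\Lambda)$.
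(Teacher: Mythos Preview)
Your argument is correct and follows essentially the same strategy as the paper's own proof: items (1) and (2) are immediate from the definition, while (3) is verified at the basepoint $\mathcal{L}(0)$, shown to define a closed $\mathcal{G}$-equivariant condition on $M$, and therefore passes to the reduced Zariski closure $\Mloc(\Lambda)$. The paper's proof is terser, merely asserting that (3) is ``closed and $\mathcal{G}$-equivariant'' without spelling out the Fitting-ideal justification or the explicit basepoint computation you supply.
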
 
\begin{proof}
The first two conditions follow directly from the definition of the local model. By the definition of $ \mathcal{L}(0) $ we have $\mathcal{L}(0) = \bar{\mu} (u-\pi) \mathbb{L}$ where $ \bar{\mu}(u-\pi) = \text{diag} ( (u-\pi)^{-1}, 1,\ldots ,1, u-\pi)  $. We can easily see that $(3)$ is true for $ \mathcal{L}(0) $. Since condition $(3)$ is closed and $\mathcal{G}$-equivariant  
it also holds for $\mathcal{L}$ and the proposition follows.
\end{proof}

Define $\mathcal{F}^{\prime}$ to be the image of $\mathcal{L}$  by the map
$$
(u-\pi)^{-1}\mathbb{L}_R /(u-\pi)\mathbb{L}_R  \xrightarrow{u-\pi}\mathbb{L}_R /(u-\pi)^2 \mathbb{L}_R .
$$ 
Define the symmetric bilinear form: 
$$ 
\langle \; , \; \rangle \textprime :  \mathbb{L}/(u-\pi)^2 \mathbb{L} \times  \mathbb{L}/(u-\pi)^2 \mathbb{L} \rightarrow \mathcal{O}[u]/(u-\pi)^2\mathcal{O}[u],
 $$
by 
\[
 \langle \; , \; \rangle\textprime = \langle \; , \; \rangle \text{mod}  (u-\pi)^2.
 \]
  Notice, that condition $(2)$ above means that $\langle \mathcal{L}, \mathcal{L} \rangle \in R[u]$ under the $R$-base change of the bilinear form $\langle \; , \; \rangle$. Thus, $\mathcal{F}^{\prime}$ is isotropic for $\langle \; , \; \rangle\textprime $ on $\mathbb{L}_R/(u-\pi)^2 \mathbb{L}_R \times  \mathbb{L}_R/(u-\pi)^2 \mathbb{L}_R$, i.e. $ \langle  \mathcal{F}^{\prime}, \mathcal{F}^{\prime} \rangle\textprime = 0  $. We also observe that $ {\rm rank}(u-\pi) \leq 1 $ where $ u-\pi : \mathcal{F}^{\prime} \rightarrow \mathcal{F}^{\prime} $. That follows from condition $(3)$ and the fact that $ (u-\pi)^2 \mathbb{L}_R = 0 $ in $\mathbb{L}_R/(u-\pi)^2 \mathbb{L}_R$.

\subsection{The affine chart}
For the sake of simplicity we fix $d = 2n$ and $l = 2r$. We get similar results for all the other cases.

For any $\mathcal{O}$-algebra $R$, let us  consider the $R$-submodule:
$$
\mathcal{F}  = \{ (u-\pi)v + Xv\, |\, v \in R^d \} \subset (u-\pi) R^d \oplus R^d \cong \mathbb{L}_R /(u-\pi)^2 \mathbb{L}_R 
$$
with $X \in \Mat_{d\times d} (R)$. 

We ask that $\mathcal{F}$  satisfies the following three conditions:
\begin{enumerate}
   \item $\mathbf{u}$\textbf{-stable:} It suffices to be $(u-\pi)$-stable. Let $(u-\pi)v + Xv \in \mathcal{F}$. Then there exists $w \in  R^d$ such that $ (u-\pi)^2v + (u-\pi) Xv = (u-\pi)w + Xw $. This gives $ X u v - X \pi v = uw - \pi w+ X w $ and so: 
    $$ w= Xv, $$ 
    $$ -\pi Xv = -\pi w + Xw .$$ 
    By substituting the former equation to the latter, we have $ X^2 v =0  $. Because this is correct for every $v$, we have $ X^2 =0$. Observe that $X$ is the matrix giving multiplication by $(u-\pi)$ on $\mathcal{F}$.
  
    \item \textbf{Isotropic:} Let $(u-\pi)v + Xv \in \mathcal{F}$. We want  
    $$ \langle (u-\pi)v + Xv , (u-\pi)v + Xv \rangle\textprime = 0 $$
    and recall that $\langle \; , \; \rangle\textprime = \langle \; , \; \rangle \text{mod}  (u-\pi)^2$. By simplifying the above equation we have 
    $$ -2 ( u - \pi )\langle v , Xv \rangle\textprime  = \langle Xv , Xv \rangle\textprime .$$  
    The above relation holds for any $v$ and so we get: 
    $$ -2 ( u - \pi ) (S_0 + u S_1) X  = X ^ {t}(S_0 + u S_1) X$$
    where $S_0$, $S_1$ are the matrices with $S=S_0+\pi S_1=(\langle e_i, e_j\rangle)_{i, j}$ as in \S \ref{Normalforms}.
    By simplifying the above relation we have:
    $$ \quad \quad \quad  2 \pi S_0 X + 2 \pi^2 S_1 X + u ( -2 \pi S_1 X - 2 S_0 X) = X^{t} S_0 X+ u (X^{t} S_1 X)$$ which amounts to 
   $$ X^{t} S_0 X = 2 \pi (S_0 X + \pi S_1 X) \ \hbox{\rm and}\ X^{t} S_1 X  = -2 (S_0 X + \pi S_1 X ). $$
    \item \textbf{rank}$\boldsymbol{( u-\pi}$ $\! \!  \mathbf{|} $ $ \mathbfcal{F'}$ $\boldsymbol{ \!  ) \leq 1 :}$ By the above, this translates to $ \wedge^2 X = 0 $.

\end{enumerate}

Let $\mathcal{U}^{\text{naive}}$ be the corresponding scheme of $\mathcal{F}$ defined by the $d \times d$ matrices $X$ which satisfy the following relations:
$$ X^2 = 0 , \quad X^{t} S_0 X - 2 \pi (S_0 X + \pi S_1 X)=0,
$$
$$  \wedge^2 X = 0 , \quad X^{t} S_1 X  + 2 (S_0 X + \pi S_1 X ) =0.
$$ 
We denote by $ I^{\text{naive}}$ the ideal generated by the entries of the above relations. 

The conditions (1)-(3) are necessary but not always sufficient for $\mathcal{L}$ to correspond to an $R$-valued point of $\Mloc (\Lambda)$. Let $\mathcal{U} = \Mloc(\Lambda)\cap \mathcal{U}^{\text{naive}}$.

\begin{Lemma}\label{GenericFibers}
The generic fibers of $\mathcal{U}$ and $\mathcal{U}^{\text{naive}} $ are not equal.
\end{Lemma}
\begin{proof}
The generic fiber of $\mathcal{U}^{\text{naive}}$ contains the additional $\breve F$-point $\mathcal{L} = \mathbb{L}_{\breve F}$ which is not in the orbit $[\mathcal{L}(0)]$ of $\bar{\mu}$ in the affine Grassmannian $\text{Aff}_G$.  See also \cite[Remark 4.2.3]{PaZa} and the proof of \cite[Theorem 5.1.6]{PaZa} for more details.
\end{proof}

Also calculations, using Macaulay 2, in dimensions $d=6,7,8$ show that $\mathcal{U}^{\text{naive}}$ has non-reduced special fiber. 
 
Our goal is to calculate the $\mathcal{O}$-flat closed subscheme $\mathcal{U}$ of $\mathcal{U}^{\text{naive}}$ by adding some explicit relations
in the ideal $ I^{\text{naive}}$. The resulting $\mathcal{U}$ is an open subscheme of $\Mloc(\Lambda)$.

Observe that the point $\mathbb{L}$ is fixed by the action of the group scheme $\mathcal{G}^{0}$ and so its our worst point. Thus, $ \mathcal{U}$ is an open neighborhood around the worst point $\mathbb{L}$. Then these additional
relations, together with $I^{\rm naive}$, give explicit equations that describe an open subset around the worst point of our local model $ \Mloc(\Lambda)$.

We introduce some notation that will help us defining those relations. We first rewrite our matrix $X := (x_{ij})_{1 \leq i,j \leq d} $ as follows:

$$ X =\left[\ 
\begin{matrix}[c|c|c]
E_1 & O_1 & E_2 \\ \hline
B_1 & A & B_2 \\ \hline
E_3 & O_2 & E_4
\end{matrix}\ \right]$$
where $E_i \in \Mat_{(n-r) \times (n-r)}$, $ O_j \in \Mat_{(n-r) \times l} $, $ B_{\ell} \in \Mat_{l \times (n-r)}$ and $ A \in \Mat_{l \times l}$. We denote by $\mathcal{O}[X]$ the polynomial ring over $\mathcal{O},$ with variables the entries of the matrix $X.$ We also write $J_m$ for the unit antidiagonal matrix of size $m$,
$$ J_m := \begin{pmatrix} 
          &  & 1  \\
          & \iddots  & \\
         1 &  &   \\
         
    \end{pmatrix} .$$
We will show that by adding the following relations:
$$ Tr(X)=0 ,\quad Tr(A) + 2 \pi =0 , \quad B_2 J_{n-r} B_1^{t}- A J_{l} =0,   $$
we get the desired $\mathcal{O}$-flat scheme $\mathcal{U}$ in the cases where $(d,l)=$(even,even) and $(d,l)=$ (odd,odd). By adding similar relations we get the corresponding result in cases where $(d,l)=$(even,odd) and $(d,l)=$(odd,even). Next, we state the main theorems of this paper.

\begin{Theorem}\label{mthm1}
Suppose that $d$ and $l$ have the same parity so $d=2n$ and $l=2r$, or $d=2n+1$ and $l=2r+1$. Then an affine chart of the local model $\Mloc(\Lambda)$ around the worst point $  \mathcal{L} = \mathbb{L}_{\bar{\mathbb{F}}_p}$ is given by $ U_{d, l}=\Spec(\mathcal{O}[X]/I)$, which is defined by the quotient of the polynomial ring $\mathcal{O}[X]=\CO[(x_{i, j})_{1\leq i, j\leq d}]$ by the ideal
$$ I = I^{\text{naive}} + I^{\text{add}} $$
where 
$$I^{\text{add}} = \left(Tr(X), \, Tr(A) + 2 \pi  , \, B_2 J_{n-r} B_1^{t}- A J_{l} \right) .$$

\end{Theorem}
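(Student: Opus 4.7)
The plan is to exploit the fact that $\mathcal{U}$ is already known to be a flat closed subscheme of $\mathcal{U}^{\text{naive}}$ that agrees with $\mathcal{U}^{\text{naive}}$ on the generic fiber; equivalently, the ideal cutting out $\mathcal{U}$ in $\mathcal{O}[X]$ is the $\pi$-saturation of $I^{\text{naive}}$. I will show that $I = I^{\text{naive}} + I^{\text{add}}$ coincides with this saturation by establishing two things separately: (1) every generator of $I^{\text{add}}$ is a consequence of $I^{\text{naive}}$ after inverting $\pi$, so that $\Spec(\mathcal{O}[X]/I)$ and $\mathcal{U}^{\text{naive}}$ have the same generic fiber; and (2) $\mathcal{O}[X]/I$ is itself $\mathcal{O}$-flat, so the ideal is already saturated and $\Spec(\mathcal{O}[X]/I)$ is the desired flat closure.

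For step (1), inverting $\pi$ I use $X^2 = 0$ and $\wedge^2 X = 0$ to write $X = vw^t$ with $w^t v = 0$, which gives $Tr(X) = 0$ immediately. The remaining relations $Tr(A) + 2\pi = 0$, $A J_l - J_l A^t = 0$ and $B_2 J_{n-r} B_1^t = A J_l$ will then be extracted from the two isotropy identities $X^t S_0 X = 2\pi (S_0 + \pi S_1) X$ and $X^t S_1 X + 2(S_0 + \pi S_1)X = 0$ by reading off appropriate block components and taking traces, making free use of $1/\pi$ in the algebraic manipulations (which is legitimate on the generic fiber).

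For step (2), the crucial input will be Theorem \ref{simplifyProp}, which identifies $\mathcal{O}[X]/I$ with the much smaller ring $R = \mathcal{O}[B_1|B_2]/(\wedge^2(B_1|B_2),\ Tr(B_2 J_{n-r} B_1^t J_l) + 2\pi)$. Granting this, $\Spec(R)$ is a hypersurface in the determinantal scheme $D = \Spec(\mathcal{O}[B_1|B_2]/\wedge^2(B_1|B_2))$, and $D$ is known to be Cohen--Macaulay of the expected dimension (see \cite{DR}). A dimension count shows that the trace function $Tr(B_2 J_{n-r} B_1^t J_l) + 2\pi$ is a non-zero-divisor on $D$, so $R$ is again Cohen--Macaulay. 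Since the special fiber of $\Spec(R)$ has dimension one less than the total scheme, $\pi$ acts as a non-zero-divisor on $R$ and $R$ is $\mathcal{O}$-flat.

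Combining (1) and (2), $\Spec(\mathcal{O}[X]/I)$ is a flat closed subscheme of $\mathcal{U}^{\text{naive}}$ with the same generic fiber, and is therefore equal to the flat closure $\mathcal{U}$. The main obstacle in this plan is Theorem \ref{simplifyProp}: its proof requires an intricate but purely elementary block-matrix manipulation, systematically using the relations of $I^{\text{naive}}$ together with those of $I^{\text{add}}$ to eliminate the entries of the blocks $E_i$, $O_j$ and $A$ in favor of $B_1$ and $B_2$. The flatness step is conceptually transparent once the determinantal description is in place, since it reduces to standard facts about the Cohen--Macaulayness of determinantal varieties.
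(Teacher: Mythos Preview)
Your proposal follows the same overall strategy as the paper: both arguments hinge on Theorem \ref{simplifyProp} to rewrite $\mathcal{O}[X]/I$ as a hypersurface in a determinantal ring, and then use the Cohen--Macaulayness of determinantal varieties to deduce flatness. This is exactly the content of Sections \ref{sect4}--\ref{flatness}.

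The one place where you diverge from the paper is your step (1). The paper does not verify directly that $I^{\text{add}} \subset I^{\text{naive}}$ after inverting $\pi$; instead it uses the geometric input from Section 3 that $\mathcal{U}$ and $\mathcal{U}^{\text{naive}}$ already share the same generic fiber (an integral scheme of dimension $d-2$), and then the dimension computation in Remark \ref{dimension} forces $(U_{d,l})_F = (\mathcal{U}^{\text{naive}})_F$. Your direct approach is a legitimate alternative, but the sketch you give has a gap: the factorization $X = vw^t$ is a pointwise statement over a field and does not hold in the coordinate ring $(\mathcal{O}[X]/I^{\text{naive}}) \otimes_{\mathcal{O}} F$. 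In particular, $Tr(X)=0$ does \emph{not} follow from $\wedge^2 X = 0$ and $X^2 = 0$ alone in a general ring (take $X = \mathrm{diag}(t,0,\dots,0)$ over $k[t]/(t^2)$), so you must invoke the isotropy relations already at this stage, or else pass to the function field---which requires knowing the generic fiber is reduced and brings you back to the paper's geometric input. The remaining generators of $I^{\text{add}}$ can indeed be extracted from block components of the isotropy identities once $\pi$ is inverted, though this takes more bookkeeping than your outline suggests.
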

 
Next, we state the theorems for the cases where $d$ and $l$ have different parity.  In each case we consider $d\times d$ matrices $X$. In order to define the submatrices ($E_i$, $O_j$, $B_{\ell}$, $A$) giving the block decomposition of $X$ we set: 
 $$r' =
\left\{
	\begin{array}{ll}
		r  & \mbox{if } l =2r  \\
		r+1 & \mbox{if } l=2r+1.
	\end{array}
\right.$$

Then we write the matrix $X$ as before, with blocks $ E_i  \! \in \! \Mat_{(n-r') \times (n-r')}$, $ O_j \! \in \! \Mat_{(n-r') \times (l+1)} $, $ A \! \in \! \Mat_{(l+1) \times (l+1)}$ and $ B_{\ell} \! \in \! \Mat_{(l+1) \times (n-r')}$. We denote by $A' $ the matrix $A$ without the $(n+1)$-row and $(n+1)$-column. Lastly, we denote by $ Q $ the $(r'+1 )$-column of $A$, we set $ H = \text{diag}(1^{(r')},0,1^{(r)})$ and $$J'_{l+1} =
\left\{
	\begin{array}{ll}
		J_{l+1}  & \mbox{if } l =2r  \\
		\text{diag}(0^{(r)},1^{(2)},0^{(r)}) + \text{antidiag}(1^{(r)},0^{(2)},1^{(r)})& \mbox{if } l=2r+1.
	\end{array}
\right.$$

\begin{Theorem}\label{mthm2}
 Suppose that $d$ and $l$ have opposite parity, so $d= 2n+1$ and $l=2r $
 or $d=2n$ and $l=2r+1$. An affine chart of the local model $\Mloc(\Lambda)$ around the worst point $ \mathcal{L} = \mathbb{L}_{\bar{\mathbb{F}}_p}$ is given by $  U_{d, l}= \Spec(\mathcal{O}[X]/I)$, which is defined by the quotient of the polynomial ring $\mathcal{O}[X]$ by the ideal
$$ I = I^{\text{naive}} + I^{\text{add}} $$
where 
\[
I^{\text{add}} = \big(Tr(X),   Tr( A') + 2 \pi  ,  2 H  B_2 J_{n-r'} B_1^{t} H  + H  QQ^{t} H  - 2 H  A J'_{l+1} H  \big) .
\]
    \end{Theorem}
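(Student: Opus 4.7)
The plan is to follow the template used for Theorem \ref{mthm1}, suitably modified for the asymmetric block decomposition forced by the odd middle index $n+1$. By Section 3.c, every affine neighborhood $\mathcal{U}$ of the worst point $\mathbb{L}\in\Mloc(\Lambda)$ embeds as a closed subscheme of $\mathcal{U}^{\text{naive}}=\Spec(\mathcal{O}[X]/I^{\text{naive}})$, and since $\Mloc(\Lambda)$ is flat over $\mathcal{O}$, $\mathcal{U}$ must coincide with the flat closure of $(\mathcal{U}^{\text{naive}})_{\breve F}$ inside $\mathcal{U}^{\text{naive}}$. The task is therefore to show that this flat closure is cut out precisely by $I=I^{\text{naive}}+I^{\text{add}}$.

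First I would verify that every generator of $I^{\text{add}}$ vanishes on the generic fiber. The relation $\text{Tr}(X)=0$ is forced by the infinitesimal orthogonality of $X\in\mathfrak{so}(V)$. The relations $\text{Tr}(A')+2\pi=0$ and $A'J_{l}-J_{l}(A')^{t}=0$ are extracted from the naive quadratic relations $X^{t}S_0X=2\pi(S_0X+\pi S_1X)$ and $X^{t}S_1X=-2(S_0X+\pi S_1X)$ by reading off the $l\times l$ submatrix indexed by positions complementary to $n+1$, where the middle row/column of $A$ has been removed. The more intricate relation $2HB_2J_{n-r'}B_1^{t}H+HQQ^{t}H-2HAJ_{l+1}H=0$ is the opposite-parity counterpart of the same-parity relation $B_2J_{n-r}B_1^{t}=AJ_{l}$: the extra column $Q$ in the middle of the odd-dimensional block $A$ contributes the quadratic correction $HQQ^{t}H$, and the projection $H$ annihilates the $(n+1)$-row and column where the naive relation would otherwise fail on account of the central pairing $\langle e_{n+1},e_{n+1}\rangle$.

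Next I would prove an analog of Theorem \ref{simplifyProp}. Using $X^2=0$, $\text{Tr}(X)=0$ and the isotropy equations, express $E_1,E_2,E_3,E_4,O_1,O_2$ and most of $A$ as polynomial expressions in the entries of $B_1$, $B_2$ and the auxiliary central column $Q$, in the same spirit as in the same-parity simplification but keeping track of the odd middle index throughout. After this elimination, the quotient $\mathcal{O}[X]/I$ should collapse to a ring generated by the entries of $(B_1|B_2)$ (with $Q$ eliminated against $B_1,B_2$ by the last generator of $I^{\text{add}}$), modulo $\wedge^{2}(B_1|B_2)$ together with a single hypersurface equation of trace type quantizing $+2\pi$. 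This presents the affine chart as a hypersurface in a determinantal scheme of the same kind as in Theorem \ref{thm1.0}.

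Finally, by the Hochster--Eagon theorem (cf.\ \cite[Remark 2.12]{DR}), the determinantal scheme $\Spec(\mathcal{O}[B_1|B_2]/\wedge^{2}(B_1|B_2))$ is Cohen--Macaulay, so the hypersurface cut out by the remaining trace relation is also Cohen--Macaulay, and a dimension count on the special fiber shows that $\pi$ is a nonzerodivisor. Hence $U_{d,l}$ is flat over $\mathcal{O}$, its generic fiber matches that of $\mathcal{U}$, and the two coincide. The main obstacle will be the elimination step: the asymmetric correction $HQQ^{t}H$ and the extra central column of $A$ make the block-by-block bookkeeping considerably more involved than in the same-parity case, and reorganizing the output into a clean hypersurface-in-determinantal presentation is where the bulk of the work lies.
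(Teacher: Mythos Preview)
Your overall strategy matches the paper's: simplify $\mathcal{O}[X]/I$ to a hypersurface in a determinantal scheme via elementary elimination, then run the Cohen--Macaulay and flatness argument of Section~\ref{flatness}. There is, however, a concrete error in your elimination step. You assert that the last generator of $I^{\text{add}}$ eliminates $Q$ in favor of $B_1,B_2$. It does not: the relation
\[
2HB_2J_{n-r'}B_1^{t}H + HQQ^{t}H \;=\; 2HAJ_{l+1}H
\]
expresses the off-middle part of $A$ in terms of $B_1,B_2$ and $Q$, but since $Q$ enters quadratically through $QQ^{t}$ there is no way to solve for $Q$ from it. The column $Q$ (the $(n{+}1)$-th column of $X$ restricted to the middle block) survives the elimination as an independent set of coordinates. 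The correct simplified target, as recorded in Subsection~\ref{prf3}, is
\[
\mathcal{O}[X]/I \;\cong\; \frac{\mathcal{O}[B_1\,|\,Q\,|\,B_2]}{\bigl(\wedge^{2}(B_1\,|\,Q\,|\,B_2),\ \mathrm{Tr}\bigl(HB_2J_{n-r'}B_1^{t}HJ_{l+1}+\tfrac{1}{2}HQQ^{t}HJ_{l+1}\bigr)+2\pi\bigr)},
\]
so both the determinantal ideal and the trace hypersurface involve the extra column $Q$. With this correction in place, your remaining steps (Cohen--Macaulayness of the determinantal ring via \cite{DR}, the trace element having height one, $\pi$ a nonzerodivisor by dimension count) go through verbatim and coincide with the paper's argument.
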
  
    
In sections \ref{sect4}-\ref{flatness} we carry out the proof of Theorem \ref{mthm1} for the case $(d,l)=(\even,\even)$. The proof of the remaining cases of parity for $d$ and $l$ is given in Section \ref{remaining}. 

Using Theorems \ref{mthm1}, \ref{mthm2} and the fact that PZ local models have reduced special fiber, see  \cite[Theorem 9.1]{PZ}, we obtain:

\begin{Theorem}\label{spec.fib.red.}
The special fiber of $U_{d,l}$ is reduced.
\end{Theorem}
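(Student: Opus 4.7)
The plan is to combine the explicit identification provided by Theorems \ref{mthm1} and \ref{mthm2} with the reducedness theorem for PZ local models proved in \cite{PZ} (which in turn relies on Zhu's proof of the Pappas--Rapoport coherence conjecture \cite{Z}). Concretely, Theorems \ref{mthm1} and \ref{mthm2} exhibit $U_{d,l}=\Spec(\CO[X]/I)$ as the affine chart $\mathcal{U}$ constructed in Subsection~3.c around the worst point $\mathbb{L}\in\Mloc(\Lambda)$. Since the affine chart $\mathcal{U}$ is by construction a Zariski open neighborhood of $\mathbb{L}$ in $\Mloc(\Lambda)$, the base change $U_{d,l}\otimes_\CO k$ is a Zariski open subscheme of $\Mloc(\Lambda)\otimes_\CO k$.

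The core input is then the theorem of \cite{PZ} asserting that the special fiber $\Mloc(\Lambda)\otimes_\CO k$ of the PZ local model is reduced. Reducedness is a local property and hence passes to open subschemes, so I would simply invoke this to conclude that $U_{d,l}\otimes_\CO k$ is reduced. In short, the argument collapses to a one-line deduction: \emph{open in reduced is reduced}, applied to the identification $U_{d,l}\subset \Mloc(\Lambda)$ furnished by Theorems \ref{mthm1}--\ref{mthm2}.

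The only subtle point is the openness of $U_{d,l}$ inside $\Mloc(\Lambda)$: one must check that the chart obtained by adding the explicit relations of $I^{\text{add}}$ to $I^{\text{naive}}$ cuts out a genuine open neighborhood of the worst point, not merely a locally closed subscheme. This is however not a new obstacle, since it is built into the construction of $\mathcal{U}$ in Subsection~3.c, where $\mathcal{U}$ parametrizes lattices $\mathcal{L}$ whose projection $\mathcal{F}$ is of the explicit form $\{(u-\pi)v+Xv\}$; any $\mathcal{L}\in\Mloc(\Lambda)$ sufficiently close to $\mathbb{L}$ admits such a presentation.

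I anticipate no substantive obstacle at this stage: all difficulty has already been absorbed, on the one hand into Theorems \ref{mthm1} and \ref{mthm2} (whose proofs occupy the bulk of Sections~\ref{sect4}--\ref{remaining}), and on the other hand into the deep reducedness theorem of \cite{PZ}. Accordingly, this short argument is the natural ``quick'' proof of Theorem \ref{spec.fib.red.}; the longer and more informative approach, which does not rely on \cite{PZ} and instead combines the Cohen--Macaulayness of $\overline U_{d,l}$ (inherited from the determinantal scheme cut out by $\wedge^2(B_1|B_2)$) with a direct verification of generic reducedness via Serre's criterion $R_0+S_1$, is the independent proof carried out in Section~\ref{special fiber}.
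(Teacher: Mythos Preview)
Your proposal is correct and matches the paper's own quick deduction: immediately after stating Theorems \ref{mthm1} and \ref{mthm2}, the paper derives Theorem \ref{spec.fib.red.} in one line by invoking the reducedness of the special fiber of PZ local models from \cite{PZ}, exactly as you do. You also correctly anticipate that the paper supplements this with an independent proof in Section \ref{special fiber} via Cohen--Macaulayness plus generic reducedness and Serre's criterion.
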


Note that in the above theorem we do not specify the parity of $d$ and $l$. In Section \ref{special fiber} we give an independent proof of this theorem, for the case $(d,l)=(\even,\even)$, by showing that the special fiber is Cohen-Macaulay and generically reduced. A similar argument works for the rest of the cases of parity for $d$ and $l$.

\section{Reduction of Relations}\label{sect4}

In all of Section \ref{sect4}, we assume $d=2n$ and $l=2r$. Our goal in this
section is to prove the simplification of equations given by Theorem \ref{simpler}. 
(This corresponds to Theorem \ref{simplifyProp} of the introduction.)

We are working over the polynomial ring $ S := \mathcal{O}[( x_{i,j})_{1 \leq i,j \leq d}]$. We also set 
\[
S'':=  \mathcal{O}[(x_{t,s})_{ t \in Z, s \in Z^{c} }]
\]
 where $Z : = \{ n-(r-1), \ldots,n,n+1,  \ldots ,d-n+r\}  $ and $ Z^{c}: = \{ 1,2,3, \ldots, d\} \setminus Z $. 
 
 Recall that
$$
I = \big( X^2  , \,\,\, \wedge^2 X  ,  \,\,\, Tr(X), \,\,\, Tr(A) + 2 \pi , \,\,\, B_2 J_{n-r} B_1^{t}- A J_{2r}, $$ $$ X^{t} S_0 X - 2 \pi (S_0 X + \pi S_1 X), \,\,\,  X^{t} S_1 X  +2 (S_0 X + \pi S_1 X  )\big).
$$
We set
$$ I'' = \left(\wedge^2 (B_1 \brokenvert B_2) , Tr(B_2 J_{n-r} B_1^{t}J_{2r}) + 2 \pi \right) $$
where 
$$ \wedge^2 (B_1 \brokenvert B_2) := \Big( x_{i,j}x_{t,s} - x_{i,s}x_{t,j} \Big) _{\substack{i,t \in Z,\, j,s \in Z^{c}}}.  
$$

\begin{Theorem}\label{simpler}
There is an $\CO$-algebra isomorphism $S/I\cong S''/I''$.
\end{Theorem}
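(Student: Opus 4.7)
My plan is to exhibit mutually inverse ring homomorphisms between $S/I$ and $S''/I''$. The ``easy'' direction is the homomorphism $\bar\iota\colon S''/I'' \to S/I$ induced by the inclusion $S'' \hookrightarrow S$. To see it is well-defined, observe that $\wedge^2(B_1|B_2) \subset \wedge^2 X \subset I$, and that $Tr(B_2 J_{n-r} B_1^t J_{2r}) + 2\pi \equiv Tr(A) + 2\pi \equiv 0 \pmod{I}$, using the generator $B_2 J_{n-r} B_1^t - AJ_{2r}$ of $I$ together with $J_{2r}^2 = \mathrm{Id}$.

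The heart of the proof is constructing the reverse map $\bar\varphi\colon S/I \to S''/I''$. The key observation is that the isotropy relation $X^{t} S_1 X + 2(S_0 + \pi S_1)X = 0$ in $I$, decomposed into $3 \times 3$ blocks, contains six ``elimination blocks'' that solve for the corner and flank blocks of $X$ modulo $I$:
\begin{align*}
E_1 &\equiv -\tfrac12 J_{n-r} B_2^{t} J_{2r} B_1, & E_2 &\equiv -\tfrac12 J_{n-r} B_2^{t} J_{2r} B_2,\\
E_3 &\equiv -\tfrac12 J_{n-r} B_1^{t} J_{2r} B_1, & E_4 &\equiv -\tfrac12 J_{n-r} B_1^{t} J_{2r} B_2,\\
O_1 &\equiv -\tfrac12 J_{n-r} B_2^{t} J_{2r} A, & O_2 &\equiv -\tfrac12 J_{n-r} B_1^{t} J_{2r} A,
\end{align*}
while the generator $B_2 J_{n-r} B_1^{t} - AJ_{2r}$ yields $A \equiv B_2 J_{n-r} B_1^{t} J_{2r}$; the factor of $\tfrac12$ makes sense since $p \neq 2$. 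I then define $\varphi\colon S \to S''/I''$ by fixing the $B_1, B_2$ variables and sending every other variable to the polynomial in $B_1, B_2$ obtained by these substitutions (with the $A$-formula fed into the $O_i$ formulas).

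The principal work is to verify $\varphi(I) \subset I''$. The generators $B_2 J_{n-r} B_1^{t} - AJ_{2r}$ and $Tr(A) + 2\pi$ pose no trouble. The generator $AJ_{2r} - J_{2r} A^{t}$ becomes $B_2 J_{n-r} B_1^{t} - B_1 J_{n-r} B_2^{t}$, whose entries are $\mathcal{O}$-linear combinations of $2 \times 2$ minors of $(B_1|B_2)$. Six of the nine blocks of $\varphi(X^{t} S_1 X + 2(S_0 + \pi S_1)X)$ vanish by construction; the remaining three ``middle-row'' blocks, as well as the blocks of $\varphi(X^2)$ and $\varphi(X^{t} S_0 X - 2\pi(S_0+\pi S_1)X)$, the minor entries of $\varphi(\wedge^2 X)$, and $\varphi(Tr(X))$, all reduce after expansion to expressions that factor through either a $2\times 2$ minor of $(B_1|B_2)$ or through the trace generator $Tr(B_2 J_{n-r} B_1^{t} J_{2r}) + 2\pi$. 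As a representative example, the $(2,1)$-block of $\varphi(X^{t} S_1 X + 2(S_0 + \pi S_1)X)$ simplifies to $J_{2r} B_1 \cdot (J_{n-r} B_2^{t} J_{2r} B_1 + 2\pi I_{n-r})$, and each entry lies in $I''$ (it is a multiple of the trace generator modulo $\wedge^2(B_1|B_2)$).

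Finally, $\bar\varphi \circ \bar\iota = \mathrm{id}$ is immediate because $\varphi$ fixes the $B_1, B_2$ variables, while $\bar\iota \circ \bar\varphi = \mathrm{id}$ holds because, for every variable $x$ of $S$, the congruence $x \equiv \varphi(x) \pmod I$ is exactly the generator of $I$ from which the corresponding elimination formula was derived. The main obstacle is the sheer volume of case-checking in step three above; the guiding principle is that the rank-one structure encoded in $\wedge^2 X = 0$, together with the uniform parity assumption $d = 2n$, $l = 2r$, makes every required cancellation an instance of either a $2 \times 2$ minor relation from $\wedge^2(B_1|B_2)$ or the scalar trace relation.
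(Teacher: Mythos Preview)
Your proposal is correct and follows essentially the same route as the paper: both arguments rest on the same elimination formulas (the paper's relations (1)--(6) together with $A = B_2 J_{n-r} B_1^{t} J_{2r}$) and then verify that every remaining generator of $I$ reduces, after substitution, to a combination of $\wedge^2(B_1|B_2)$ and the trace relation. The only organizational difference is that the paper passes through an intermediate ideal $I'$ (first showing $X^2$, $AJ_{2r}-J_{2r}A^t$, and the $S_0$-isotropy block are redundant in $I'$, then simplifying $I'$), whereas you package the same computations as the well-definedness of an explicit inverse pair $\bar\varphi$, $\bar\iota$; the underlying case-checks (your ``middle-row'' blocks, $\wedge^2 X$, $Tr(X)$, etc.) coincide with the paper's Lemmas~4.2--4.8.
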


\begin{proof}
We define the ideal:
$$ I' = \left(  \wedge^2 X,\,   Tr(X),\,  Tr(A) + 2 \pi,\,  B_2 J_{n-r} B_1^{t}- A J_{2r},\,    X^{t} S_1 X  +2 (S_0 X + \pi S_1 X   )\right). $$
The proof will be done in two steps:
\begin{enumerate}
  
  \item Show $I=I'$. 
\smallskip

 \item Show $ S/I'\cong S''/I''$.
\end{enumerate}

\subsection{Step 1}
Our first reduction is to prove that $ I' = I,$ which will be given in Proposition \ref{I'}. To do that, we are going to show that the entries of $X^2$, $X^{t} S_0 X - 2 \pi (S_0 X + \pi S_1 X)$ are in the ideal $I'$. Proposition \ref{I'} will easily follow. The first relation is more straightforward:
\begin{Lemma}\label{sqrt}
The entries of $X^2$ are in the ideal $ I' $.
\end{Lemma}
\begin{proof}
Let $  (z_{i,j})_{1 \leq i,j \leq d} :=  X^2  $, where $ z_{i,j} =\displaystyle \sum_{a=1}^{d} x_{i,a}x_{a,j}$. Now, set 
$$t_{i,j} := x_{i,j}Tr(X) \in I' .$$
Notice also that 
$$s^{i,j}_a := x_{i,a}x_{a,j} - x_{i,j}x_{a,a}  \in I' $$
from the minors relations. Therefore
$$ t_{i,j}+ \displaystyle \sum_{a=1}^{d} s^{i,j}_a  = z_{i,j} \in I'.$$
\end{proof}
 
We have to work harder in order to show that the entries of $ X^{t} S_0 X - 2 \pi (S_0 X + \pi S_1 X) $ are in the ideal $I'.$ The first step is as follows. By a simple direct calculation the relation $ X^{t} S_1 X  +2 S_0 X + 2 \pi S_1 X = 0$ implies that:
\begin{eqnarray}
     E_1 &=& -\frac{1}{2} J_{n-r} B_2^{t}J_{2r} B_1,\\
     E_2 &=& -\frac{1}{2} J_{n-r} B_2^{t}J_{2r} B_2 ,\\
     E_3 &=& -\frac{1}{2} J_{n-r} B_1^{t}J_{2r} B_1 ,\\
     E_4 &=& -\frac{1}{2} J_{n-r} B_1^{t}J_{2r} B_2 ,\\
     O_1 &=& -\frac{1}{2} J_{n-r} B_2^{t}J_{2r} A  ,\\
     O_2 &=& -\frac{1}{2} J_{n-r} B_1^{t}J_{2r} A .
\end{eqnarray}
Therefore, all the entries from $E_i$ for $ i \in \{1,2,3,4\}$ and $O_1,O_2$ can be expressed in terms of the entries of $ B_1$, $B_2$. The second step is the following lemma.
\begin{Lemma}\label{sym}
Assume that all the $ 2\times2 $ minors of the matrix $X$ are $0.$ Then, the matrix $B_1 J_{n-r} B_2^{t} $ is symmetric.
   \end{Lemma}
   \begin{proof}
   Set $(\theta_{ij})_{1 \leq i,j \leq 2r} := B_1 J_{n-r} B_2^{t}.$ By direct calculations we  find  
   $$ \theta_{ij} = \displaystyle \sum_{t=1}^{n-r} x_{n-r+i,n-r-t+1}x_{n-r+j,n+r+t}.$$
   So,$$ 
   \theta_{ji} = \displaystyle \sum_{t=1}^{n-r} x_{n-r+j,n-r-t+1}x_{n-r+i,n+r+t}.$$
   From the minor relations we have that $ x_{i,j} x_{t,s} = x_{i,s}x_{t,j}.$ Using this and the description of the $\theta_{ij},\theta_{ji}$ we can easily see that $\theta_{ij}=\theta_{ji}.$
 \end{proof}
A useful observation, which will be used in the following lemma, is that the condition $\wedge^2 X=0$ together with the fact that the blocks $B_1$, $A$, and $B_2$, 
all share the same rows of $X$, easily give
\begin{equation}\label{TrAB}
AB_1={\rm Tr}(A)B_1, \quad AB_2={\rm Tr}(A)B_2.
\end{equation} 
We are now ready to show:   
\begin{Lemma}\label{eq1}

The entries of $X^{t} S_0 X - 2\pi(S_0 X+ \pi S_1 X)$ are in the ideal $I'$.

\end{Lemma}
\begin{proof}
Using the block form of the matrix $X$ and the relation $ X^{t} S_1 X  +2 (S_0 X + \pi S_1 X   ) = 0 $ modulo $I'$, it suffices to prove that: 
\begin{enumerate}[label=(\roman*)]
    \item $ E^{t}_1 J_{n-r}E_3 + E^{t}_3 J_{n-r}E_1 -2 \pi J_{n-r}E_3= 0$,
    \item $ E^{t}_2 J_{n-r}E_4 + E^{t}_4 J_{n-r}E_2 -2 \pi J_{n-r}E_2= 0$,
    \item $ E^{t}_1 J_{n-r}E_4 + E^{t}_3 J_{n-r}E_2 -2 \pi J_{n-r}E_4= 0$,
    \item $ E^{t}_2 J_{n-r}E_3 + E^{t}_4 J_{n-r}E_1 -2 \pi J_{n-r}E_1= 0$,
    \item $ O^{t}_1 J_{n-r}E_3 + O^{t}_2 J_{n-r}E_1 -2 \pi^2 J_{2r}B_1= 0$,
    \item $ O^{t}_1 J_{n-r}E_4 + O^{t}_2 J_{n-r}E_2 -2 \pi^2 J_{2r}B_2= 0$,
    \item $ O^{t}_1 J_{n-r}O_2 + O^{t}_2 J_{n-r}O_1 -2 \pi^2 J_{2r}A = 0$
\end{enumerate}
in the quotient ring $  S/I'. $
We prove the first relation (i) and with the same arguments we can prove the relations (ii)-(iv). Below we use the relations $(1)$ and $(3)$ for $E_1, E_3$ from above, the relations $B_2 J_{n-r} B_1^{t}= A J_{2r} $, $ A B_1 = Tr(A) B_1$ and Lemma \ref{sym}.
\begin{eqnarray*} &&E^{t}_1 J_{n-r}E_3 + E^{t}_3 J_{n-r}E_1 -2 \pi J_{n-r}E_3  \\ 
 &=& \frac{1}{4} B^{t}_1 J_{2r}B_2 J_{n-r} B^{t}_1 J_{2r}B_1 + \frac{1}{4} B^{t}_1 J_{2r}B_1 J_{n-r} B^{t}_2 J_{2r}B_1+ \pi B^{t}_1 J_{2r}B_1\\
&=& \frac{1}{2} B^{t}_1 J_{2r}B_2 J_{n-r} B^{t}_1 J_{2r}B_1 + \pi B^{t}_1 J_{2r}B_1\\
&=&  \frac{1}{2} B^{t}_1 J_{2r}A B_1 + \pi B^{t}_1 J_{2r}B_1 = \frac{1}{2} Tr(A) B^{t}_1 J_{2r} B_1 + \pi B^{t}_1 J_{2r}B_1 = 0. 
\end{eqnarray*} 
The last equality holds because $ Tr(A) + 2\pi = 0.$

Next, we prove the relation (v). The relations (vi), (vii) can be proved using similar arguments. We use the relations $(1)$, $(3)$, $(5)$ and $(6)$ from above to express $E_1$, $E_3$, $O_1$, $O_2$ in terms of $B_1$ and $B_2.$ We use Lemma \ref{sym} and the relations $B_2 J_{n-r} B_1^{t}= A J_{2r} $ and $ A B_1 = Tr(A) B_1$. 
\begin{eqnarray*} && O^{t}_1 J_{n-r}E_3 + O^{t}_2 J_{n-r}E_1 -2 \pi^2 J_{2r}B_1  \\ 
 &=& \frac{1}{4} A^{t} J_{2r} B_2 J_{n-r}B^{t}_1 J_{2r} B_1 +  \frac{1}{4} A^{t} J_{2r} B_1 J_{n-r}B^{t}_2 J_{2r} B_1 -2 \pi^2 J_{2r}B_1 \\  
 &=& \frac{1}{2} A^{t} J_{2r} B_2 J_{n-r}B^{t}_1 J_{2r} B_1  -2 \pi^2 J_{2r}B_1 = \frac{1}{2} A^{t} J_{2r} A B_1  -2 \pi^2 J_{2r}B_1 \\
 &=& - \pi J_{2r} A B_1 -2 \pi^2 J_{2r}B_1  = -\pi( Tr(A) J_{2r}  B_1 + 2 \pi J_{2r}B_1 )  = 0 .
\end{eqnarray*}  
\end{proof}
\begin{Proposition}\label{I'}
We have $ I'= I.$
\end{Proposition}
\begin{proof}
From Lemma \ref{sqrt} and Lemma \ref{eq1} we get the desired result.
\end{proof}
\subsection{Step 2.}
The goal of this subsection is to prove that $S/I'$ is isomorphic to $ S''/I'' $. 
Recall  
$$ I' = \left(  \wedge^2 X,\,   Tr(X),\,  Tr(A) + 2 \pi,\,  B_2 J_{n-r} B_1^{t}- A J_{2r},\,    X^{t} S_1 X  +2 (S_0 X + \pi S_1 X   )\right). $$  
We first simplify and reduce the number of generators of $I'.$  The desired isomorphism will then follow.
\begin{Lemma}\label{Rmrk2}
The trace $Tr(X)$ belongs to the ideal
$$\left(  \wedge^2 X  ,  \,\,\,Tr(A) + 2 \pi ,  \,\,\, B_2 J_{n-r} B_1^{t}- A J_{2r},   \,\,\,  X^{t} S_1 X  +2 (S_0 X + \pi S_1 X   )\right).$$ 
 \end{Lemma}
 \begin{proof}
  We first write: $$Tr(X) = Tr(E_1) + Tr(E_4) + Tr(A).$$ 
  By the relations $(1)$, $(4)$ from Subsection $4.1$ we get that the entries of $  E_1 + \frac{1}{2}J_{n-r}B^{t}_2 J_{2r}B_1 $ and $ E_4 + \frac{1}{2}J_{n-r}B^{t}_1 J_{2r}B_2 $, belong to the ideal  
  $$\left(  \wedge^2 X  , \quad Tr(A) + 2 \pi , \quad B_2 J_{n-r} B_1^{t}- A J_{2r},  \quad  X^{t} S_1 X  +2 (S_0 X + \pi S_1 X   )\right). $$
  Also, the element 
  $$ Tr( J_{n-r}B^{t}_1 J_{2r}B_2 ) - Tr(A)$$
  belongs to the above ideal.
  Thus, 
  \begin{eqnarray*}  
 Tr(X)  &=& Tr(E_1) + Tr(E_4) + Tr(A) \\ 
   &=& Tr(E_1 + \frac{1}{2}J_{n-r}B^{t}_2 J_{2r}B_1  ) + Tr(E_4+ \frac{1}{2}J_{n-r}B^{t}_1 J_{2r}B_2) + Tr(A) \\
    && - \frac{1}{2}Tr( J_{n-r}B^{t}_1 J_{2r}B_2)- \frac{1}{2}Tr(  J_{n-r}B^{t}_2 J_{2r}B_1) \\
   &=& Tr(E_1 + \frac{1}{2}J_{n-r}B^{t}_2 J_{2r}B_1  ) + Tr(E_4+ \frac{1}{2}J_{n-r}B^{t}_1 J_{2r}B_2),
    \end{eqnarray*}   
     belongs to the above ideal, as desired.\end{proof}
     
From the above lemma we obtain
$$I'=\left(  \wedge^2 X  ,  \,\,\,Tr(A) + 2 \pi ,  \,\,\, B_2 J_{n-r} B_1^{t}- A J_{2r},   \,\,\,  X^{t} S_1 X  +2 (S_0 X + \pi S_1 X   )\right). $$
Next, we show:
\begin{Lemma} We have
$  I' = \left(  \wedge^2 X  ,  \,\,\,Tr(A) + 2 \pi ,  \,\,\, B_2 J_{n-r} B_1^{t}- A J_{2r}\right) + \mathcal{I}' $, 
where $\mathcal{I}'$ is the ideal generated by the relations $(1)$-$(6)$ from Subsection $4.1$.
\end{Lemma}
\begin{proof}
Using the block form of the matrix $X$ and the relation $ X^{t} S_1 X  +2 (S_0 X + \pi S_1 X   ) = 0 $, it suffices to prove that:
\begin{enumerate}[label=(\alph*)]
    \item $ A^{t}J_{2r}B_1 + 2 \pi J_{2r} B_1 = 0$,
    \item $ A^{t}J_{2r}B_2 + 2 \pi J_{2r} B_2 = 0$,
    \item $ A^{t}J_{2r}A + 2 \pi J_{2r} A = 0$,
\end{enumerate}
in the quotient ring of $S$ by $\left(  \wedge^2 X  ,  \,\,\,Tr(A) + 2 \pi ,  \,\,\, B_2 J_{n-r} B_1^{t}- A J_{2r}\right) + \mathcal{I}' $.

We first discuss (a). Recall that $ A =  B_2 J_{n-r} B_1^{t}J_{2r}, $ $ A B_1 = Tr(A) B_1$ and $ Tr(A) + 2 \pi = 0.$ Thus,
\begin{eqnarray*}    
A^{t}J_{2r}B_1 + 2 \pi J_{2r} B_1  &=& J_{2r} B_1 J_{n-r}B^{t}_2  J_{2r}B_1 + 2 \pi J_{2r} B_1 \\ 
 &=& J_{2r}A B_1 + 2 \pi J_{2r} B_1 \\
 &=& J_{2r} Tr(A) B_1 + 2 \pi J_{2r} B_1 = 0.
\end{eqnarray*}  
Using similar arguments we can prove that the relations (b) and (c) hold.
\end{proof}

The final step is to look more carefully at the minors that come from $ \wedge^2 X$.
\begin{Lemma}\label{minors}
\ \  $ \wedge^2 X \in \mathcal{I}' + \left(   \wedge^2 (B_1 \brokenvert B_2)  , \, \,  Tr(A) + 2 \pi , \, \,  B_2 J_{n-r} B_1^{t}- A J_{2r}\right).$
\end{Lemma}

\begin{proof}
In the proof, we use phrases like: ``minors \textit{only} from $E_c$'', ``minors \textit{only} from $A$ and $B_{\ell}$'', or ``minors from $A$ and $E_c$''. Let us explain what we mean by these terms. Consider the minor
 \[
m^{i, j}_{t,s}= \begin{pmatrix} x_{i, j} & x_{i, s}\\ 
x_{t, j}& x_{t, s}\\
\end{pmatrix}=
x_{i,j}x_{t,s} -x_{i,s}x_{t,j} .
\]
 When we say that ``the minor comes \textit{only} from $E_c$'' we mean that \textit{all} of the entries $\{x_{i,j},x_{t,s},x_{i,s},x_{t,j}\}$ are entries of $E_c$ for $ c \in \{1, 2, 3, 4\} $. Similarly, when we say ``the minor comes \textit{only} from $A$ and $B_{\ell}$'' we mean that \textit{all} of $\{x_{i,j},x_{t,s},x_{i,s},x_{t,j} \}$ are entries either of $ A$ or of $ B_{\ell}$ and \textit{at least one} of the
$\{x_{i,j},x_{t,s},x_{i,s},x_{t,j} \}$ is an entry of $A$ and \textit{at least one} of the $\{x_{i,j},x_{t,s},x_{i,s},x_{t,j} \}$ is an entry of $B_{\ell}$. On the other hand, when we say that ``the minor comes from $A$ and $ E_c$'' we mean that \textit{at least one} of the $\{x_{i,j},x_{t,s},x_{i,s},x_{t,j} \}$ is an entry of $ A$ and \textit{at least one} of the $\{x_{i,j},x_{t,s},x_{i,s},x_{t,j} \}$ is an entry of $ E_c$ for $ c \in \{1, 2, 3, 4\} $.
We have the following cases of minors: 
\begin{enumerate}
            \begin{multicols}{2}
            \item \textit{only} from $E_c$ 
            
            \item \textit{only} from $A$
            
            \item \textit{only} from $O_m$
            
            \item from $E_c$ and $A$ 
            
            \columnbreak
            \item \textit{only} from $A$ and $B_{\ell}$
            
            \item \textit{only} from $E_c$ and $O_m$
            
            \item \textit{only} from $A$ and $O_m$
            
            \item \textit{only} from $E_c$ and $B_{\ell}$
            
            \end{multicols}
        \end{enumerate}
In each case, we will show that the corresponding minors belong to
$$
 \mathcal{I}' + \left(   \wedge^2 (B_1 \brokenvert B_2)  , \, \,  Tr(A) + 2 \pi , \, \,  B_2 J_{n-r} B_1^{t}- A J_{2r}\right). $$
We will start by considering case ($1$), i.e. minors \textit{only} from $ E_c$. It suffices to prove $ x_{i,j}x_{t,s} =x_{i,s}x_{t,j}$ in the quotient ring 
$$ \frac{S}{\mathcal{I}' + \left(   \wedge^2 (B_1 \brokenvert B_2)  , \, \,  Tr(A) + 2 \pi , \, \,  B_2 J_{n-r} B_1^{t}- A J_{2r}\right)} .$$
By using minors from $ B_{\ell}$ for $ \ell \in \{1, 2\} $ and for all $i,j \in Z^{c}$, we have the following equation in the above quotient ring: 
    $$ \bigg( \sum_{a= n-(r-1)}^{d-(n-r)} \frac{1}{2}x_{d+1-a,d+1-i}x_{a,j} \bigg) \bigg( \sum_{a= n-(r-1)}^{d-(n-r)} \frac{1}{2} x_{d+1-a,d+1-t}x_{a,s}\bigg)= $$ $$ \bigg( \sum_{a= n-(r-1)}^{d-(n-r)} \frac{1}{2} x_{d+1-a,d+1-i}x_{a,s} \bigg) \bigg( \sum_{a= n-(r-1)}^{d-(n-r)} \frac{1}{2} x_{d+1-a,d+1-t}x_{a,j}\bigg).$$
By using the relations ($1$)-($4$) from Subsection $4.1$ we can express the entries $ x_{i,j}$ of $E_c$ as: 
$$ x_{i,j} = -\sum_{a= n-(r-1)}^{d-(n-r)} \frac{1}{2} x_{d+1-a,d+1-i}x_{a,j}  $$
with $ i,j \in Z^{c}$. Using this and the above equality we obtain:
$$  x_{i,j}x_{t,s} =x_{i,s}x_{t,j}.$$
The rest of cases ($2$)-($8$) can be handled by similar arguments. More precisely, by using the relations ($1$)-($6$) from Subsection $4.1$ we can express all the entries from $E_i$ for $ i \in \{1,2,3,4\}$ and $O_1,O_2$ in terms of the entries of $ B_1,B_2$. Also, by using $A = B_2 J_{n-r} B_1^{t}J_{2r}$ we can express all the entries of $A$ in terms of the entries of  $ B_1,B_2$. After that, by using the $2\times2$-minors from the matrix $(B_1|B_2)$ we get the desired result in all the remaining cases.  
\end{proof}

{\sl End of proof of Theorem \ref{simpler}:} From the above lemma we obtain that 
$$ I' = \mathcal{I}' + \left(   \wedge^2 (B_1 \brokenvert B_2)  , \, \,  Tr(A) + 2 \pi , \, \,  B_2 J_{n-r} B_1^{t}- A J_{2r}\right).$$
Observe that an equivalent way of writing $I'$ is:
$$ I' = \mathcal{I}' + \left(   \wedge^2 (B_1 \brokenvert B_2)  , \, \,  Tr(B_2 J_{n-r} B_1^{t}J_{2r}) + 2 \pi , \, \,  B_2 J_{n-r} B_1^{t}- A J_{2r}\right).$$
Using this and the fact that $I'=I $ the proof of Theorem \ref{simpler} follows.\end{proof}

\section{Flatness}\label{flatness}
We continue to assume $d=2n$, $l=2r$.

Recall that $U_{d,l} = \Spec(S/I)$. The goal of this section is to prove that $U_{d,l}$ is flat over $\mathcal{O}$ as given by Theorem \ref{Flat}. The simplification that we obtained from Theorem \ref{simpler} quickly gives the following, which in turn plays a crucial role for the proof of Theorem \ref{Flat}. Then the proof of Theorem \ref{mthm1} easily follows.

\begin{Theorem}\label{C-M mixed}
$U_{d,l}$ is Cohen-Macaulay.
\end{Theorem}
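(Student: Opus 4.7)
The strategy is to leverage the simplification proved in Theorem \ref{simpler}, which exhibits $U_{d,l}$ as a hypersurface in a classical determinantal scheme. Explicitly, by Theorem \ref{simpler},
\[
S/I \cong \mathcal{O}[B_1|B_2]/\bigl(\wedge^2(B_1 \brokenvert B_2),\ Tr(B_2 J_{n-r} B_1^{t} J_{2r}) + 2\pi\bigr).
\]
Setting $f := Tr(B_2 J_{n-r} B_1^{t} J_{2r}) + 2\pi$ and
\[
D := \Spec\bigl(\mathcal{O}[B_1|B_2]/\wedge^2(B_1 \brokenvert B_2)\bigr),
\]
this presents $U_{d,l}$ as the closed subscheme of $D$ cut out by the single equation $f = 0$.

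The plan has two ingredients. First, $D$ is Cohen-Macaulay: this is the classical fact that the scheme of matrices of rank at most one, defined by the vanishing of the $2\times 2$ minors of a generic $2r \times 2(n-r)$ matrix, is Cohen-Macaulay, and over $\mathcal{O}$ it is recorded in \cite[Remark 2.12]{DR}. Moreover $D$ is integral, being (up to a trivial relabeling of indeterminates) the affine cone over the Segre embedding of $\mathbb{P}^{2r-1} \times \mathbb{P}^{2(n-r)-1}$.

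Second, to transfer Cohen-Macaulayness from $D$ to $U_{d,l}$, it suffices to check that $f$ is a non-zero-divisor on $\mathcal{O}[B_1|B_2]/\wedge^2(B_1 \brokenvert B_2)$. Since $D$ is integral this is equivalent to showing $f \neq 0$ in the coordinate ring of $D$; and the specialization $B_1 = B_2 = 0$ sends $f$ to $2\pi$, which is nonzero in $\mathcal{O}$ because $p \neq 2$ and $\pi$ is a uniformizer. Hence $f$ is regular, and the standard fact that the quotient of a Cohen-Macaulay ring by a regular element is Cohen-Macaulay yields the desired conclusion.

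The only step that needs any care at all is the non-zero-divisor check, and that reduces at once to the integrality of $D$ together with the $p \neq 2$ hypothesis. Everything else is formal once Theorem \ref{simpler} and the classical Cohen-Macaulayness of rank-one determinantal schemes are granted.
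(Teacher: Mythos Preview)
Your proof is correct and follows essentially the same route as the paper: invoke Theorem \ref{simpler}, use \cite[Remark 2.12]{DR} for the Cohen--Macaulayness and integrality of the determinantal scheme $D$, and then verify that $f$ is a non-zero-divisor by exhibiting a point of $D$ where $f$ does not vanish. The only cosmetic difference is that the paper checks non-vanishing at a carefully chosen rank-one matrix, whereas you use the origin $B_1 = B_2 = 0$ (which is simpler and works equally well, since $2\pi \neq 0$ in the domain $\mathcal{O}$; the hypothesis $p \neq 2$ is in fact not needed for this step).
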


\begin{proof}
Denote by $ \mathcal{O}[B_1|B_2]$ the polynomial ring over $\mathcal{O}$ with variables the entries of the matrix $(B_1|B_2)$. From Theorem \ref{simpler} we obtain the isomorphism 
$$
 \frac{S}{I} \cong \frac{ \mathcal{O}[B_1|B_2]}{\left(\wedge^2 (B_1 \brokenvert B_2) , Tr(B_2 J_{n-r} B_1^{t}J_{2r})+2\pi \right)}.
 $$
 Set $\displaystyle \mathcal{R}:=  \mathcal{O}[B_1|B_2] / \big(\wedge^2 (B_1 \brokenvert B_2) \big) $. By  \cite[Remark  2.12]{DR}, the ring $\mathcal{R}$ is Cohen-Macaulay and an integral domain. We consider the point $P$ of the determinantal variety which is defined by the relations:
 $$
\begin{pmatrix} x_{n-r+1, 1} & x_{n-r+1, d}\\ 
x_{n+r, 1}& x_{n+r, d}\\
\end{pmatrix}= \begin{pmatrix} 1-\pi & 1-\pi\\
1 & 1 \\ 

\end{pmatrix} 
 $$
and we set all the other variables equal to zero. We can easily observe that $Tr(B_2 J_{n-r} B_1^{t}J_{2r})+2\pi$ is not zero over the point $P$. Therefore, we have 
$$ht((Tr(B_2 J_{n-r} B_1^{t}J_{2r})+2\pi)) = 1.$$
We apply the fact that  if $A$ is Cohen-Macaulay and the ideal $I= (a_1,\ldots,a_r) $ of $A$ has height $r$ then $A/I$ is Cohen-Macaulay (\cite[example 3 (16.F)]{Ma})  to $A=\mathcal{R}$ and $a_1=Tr(B_2 J_{n-r} B_1^{t}J_{2r})+2\pi$. We obtain that
$$ \frac{  \mathcal{O}[B_1|B_2] }{\left(\wedge^2 (B_1 \brokenvert B_2) , Tr(B_2 J_{n-r} B_1^{t}J_{2r})+2\pi\right)}$$
is Cohen-Macaulay.  This implies the result.
\end{proof}
\begin{Remark}\label{dimension}
From the above proof and the standard formula for the dimension of the determinantal varieties (see \cite[Proposition 1.1]{DR}) we obtain that $ \text{dim}(S/I) = \text{dim} (\mathcal{R}) -1 = d-1. $ Hence, the dimension of $ U_{d,l}$ is $d-1.$
\end{Remark}

\begin{Remark}\label{C-M remark}
Mimicking the proof of Theorem \ref{C-M mixed} and by considering Remark \ref{dimension} we obtain that the special fiber $ \overline U_{d,l} $ of $U_{d,l}$ is Cohen-Macaulay and of dimension $d-2.$ 
\end{Remark}

\begin{Theorem}\label{Flat}
$U_{d,l}$ is flat over $\mathcal{O}$ and of relative dimension $d-2$.
\end{Theorem}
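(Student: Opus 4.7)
The plan is to reduce flatness over the DVR $\mathcal{O}$ to showing that the uniformizer $\pi$ is a non-zero-divisor on $S/I$, and then to exhibit $\pi,t$ as a regular sequence in the Cohen-Macaulay ambient ring $\mathcal{R}:=\mathcal{O}[B_1|B_2]/(\wedge^2(B_1|B_2))$, where $t:=Tr(B_2 J_{n-r} B_1^{t} J_{2r})+2\pi$ so that $S/I\cong \mathcal{R}/(t)$ by Theorem~\ref{simpler}. Since $\mathcal{O}$ is a discrete valuation ring, flatness of $S/I$ is equivalent to $\pi$ being a non-zero-divisor on $S/I=\mathcal{R}/(t)$.

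First I would show that $\pi,t$ is a regular sequence in $\mathcal{R}$. As $\mathcal{R}$ is an integral domain (by \cite[Remark~2.12]{DR}) and $\pi\neq 0$, $\pi$ is automatically a non-zero-divisor on $\mathcal{R}$. The quotient $\mathcal{R}/\pi\mathcal{R}=k[B_1|B_2]/(\wedge^2(B_1|B_2))$ is again a Cohen-Macaulay integral domain, namely the affine cone on the corresponding Segre variety. I would then verify that $\bar t\equiv Tr(B_2 J_{n-r} B_1^{t} J_{2r})\pmod{\pi}$ is nonzero in $\mathcal{R}/\pi\mathcal{R}$: evaluating at the explicit point used in the proof of Theorem~\ref{C-M mixed} and reducing modulo $\pi$ yields $\bar t=2$, which is nonzero in $k$ since $p\neq 2$. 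Being a nonzero element of a domain, $\bar t$ is a non-zero-divisor on $\mathcal{R}/\pi\mathcal{R}$, so $(\pi,t)$ is a regular sequence in $\mathcal{R}$, and in particular the ideal $(\pi,t)$ has height~$2$.

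Since $\mathcal{R}$ is Cohen-Macaulay and $(\pi,t)$ is a height-$2$ ideal generated by two elements, after localizing at any maximal ideal $\mathfrak{m}$ of $\mathcal{R}$ containing $(\pi,t)$ both orderings $(\pi,t)$ and $(t,\pi)$ form regular sequences in $\mathcal{R}_{\mathfrak{m}}$ (permutability of regular sequences in a Cohen-Macaulay local ring). Therefore $\pi$ is a non-zero-divisor on $(\mathcal{R}/(t))_{\mathfrak{m}}$ at each such $\mathfrak{m}$; at the maximal ideals of $\mathcal{R}$ not containing $\pi$ this is automatic. Combining, $\pi$ is a non-zero-divisor on $\mathcal{R}/(t)=S/I$, which yields the theorem. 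The main obstacle is the verification $\bar t\neq 0$ on the special fiber determinantal variety, but this follows immediately from the explicit point constructed for Theorem~\ref{C-M mixed}. Alternatively, one can deduce flatness directly from miracle flatness applied to the Cohen-Macaulay source $U_{d,l}$ over the regular base $\Spec\mathcal{O}$, using the dimension counts $\dim U_{d,l}=d-1$ and $\dim\overline{U}_{d,l}=d-2$ from Remarks~\ref{dimension} and~\ref{C-M remark}.
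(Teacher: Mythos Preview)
Your proof is correct. The main argument---showing that $(\pi,t)$ is a regular sequence in the ambient determinantal ring $\mathcal{R}$ and then permuting locally to conclude that $\pi$ is a non-zero-divisor on $\mathcal{R}/(t)$---is a somewhat different route from the paper's. The paper instead works directly with $U_{d,l}=\Spec(S/I)$: it invokes the dimension counts $\dim U_{d,l}=d-1$ and $\dim\overline{U}_{d,l}=d-2$ (Remarks~\ref{dimension} and~\ref{C-M remark}) together with the Cohen--Macaulayness of $U_{d,l}$ (Theorem~\ref{C-M mixed}) to conclude $ht((\pi))=1$, whence $\pi$ is a non-zero-divisor. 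Your approach has the advantage of being more self-contained: it bypasses the separate dimension computations and works entirely inside the integral domain $\mathcal{R}$, where regularity of $\pi$ and of $\bar t$ is immediate once one checks $\bar t\neq 0$ at a single point (and your use of the explicit point from Theorem~\ref{C-M mixed}, reduced modulo $\pi$ to get $\bar t=2\neq 0$ since $p\neq 2$, is exactly right). The paper's approach, on the other hand, packages the same information through Krull dimension and is closer in spirit to a miracle-flatness argument. Indeed, the ``alternative'' you sketch at the end---Cohen--Macaulay source over a regular one-dimensional base with matching fiber dimensions---is essentially what the paper does.
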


\begin{proof}
 From Remark \ref{dimension} we have that $U_{d,l}$ has dimension $d-1.$ From Remark \ref{C-M remark} we have that the dimension of $\overline U_{d,l}$ is $d-2.$ Using the fact that $U_{d,l}$ is Cohen-Macaulay, see Theorem \ref{C-M mixed}, we obtain that $ht((\pi))=1$. Hence, we have that $(\pi)$ is a regular sequence i.e. $\pi$ is not a zero divisor (see \cite{Ho}). From this, flatness of $U_{d,l}$ follows.
 \end{proof}
$\quad$\\
{\sl Proof of Theorem \ref{mthm1}.} 
Given the arguments in Sections 4 and 5 above, it just remains to show that $U_{d,l} \subset \Mloc(\Lambda) $.
    
    From Lemma \ref{GenericFibers} we see that the generic fibers of $ \mathcal{U} = \Mloc(\Lambda)\cap \mathcal{U}^{\text{naive}}$ and $ \mathcal{U}^{\text{naive}}$ are not equal. Recall that $ \mathcal{U}$ is a $\mathcal{O}$-flat scheme of relative dimension $d-2$. Let $M_0$ be the open subscheme of $\mathcal{U}^{\text{naive}}$ which is the complement of
the locus where $\mathcal{L} = \mathbb{L}_{R}$. The generic fiber of $M_0$ agrees with $ \mathcal{U}\otimes_{\mathcal{O}} \breve F$. From Theorem 5.0.4, we have that $U_{d,l}$ is the $\mathcal{O}$-flat closed subscheme of $\mathcal{U}^{\text{naive}} $ of relative dimension $d-2$. We can easily see that the $\breve F$-point $\mathcal{L} = \mathbb{L}_{\breve F}$ does not belong in the generic fiber of $ U_{d,l}$.
        
Combining all the above, we see that the $\mathcal{O}$-flat schemes $ \mathcal{U} $ and $U_{d,l} $ are of relative dimension $d-2$ and have the same generic fiber. Thus, $U_{d,l}\otimes_{\mathcal{O}} \breve F \subset \Mloc(\Lambda)\otimes_{\mathcal{O}} \breve F $ and $U_{d,l} \subset \Mloc(\Lambda) $.\qed
 
\section{Special Fiber }\label{special fiber}
  
In all of Section \ref{special fiber}, we assume $d = 2n$ and $l = 2r$.

We will prove that the special fiber $\overline U_{d,l} $ of $U_{d,l} = \Spec(S/I)$ is reduced; see Section \ref{sect4} for undefined terms.\\
$\quad$\\
{\sl Proof of Theorem \ref{spec.fib.red.}.}
From Remark \ref{C-M remark} we know that $\overline U_{d,l} $ is Cohen-Macaulay. By using Serre's criterion for reducedness, it suffices to prove that the localizations at minimal primes are reduced. From Lemma \ref{prime r=1} and Lemma \ref{prime r>1} we obtain the minimal primes of $\overline U_{d,l} $. Below, we focus on the localization of $\overline U_{d,l} $ over $I_1$ for $1<r<n-1$ (see Subsection \ref{r>1} for the notation). In the other cases the proof is similar.

We first introduce some additional  notation:

Denote by $ k[B_1|B_2]$ the polynomial ring over $k$ with variables the entries of the matrix $(B_1|B_2)$.
We set  $ Z_1 :=  Z \setminus \{ n-r+1\} $ and $Z_2 := Z^{c}\setminus\{1\}$.  By direct calculations we get
\[
 Tr(B_2 J_{n-r} B_1^{t}J_{l})= \sum_{n-r+1\leq i\leq n+r, 1\leq j\leq n-r} x_{i, j}\, x_{d+1-i, d+1- j}.
 \]
Set $t_r :=\frac{1}{2} Tr(B_2 J_{n-r} B_1^{t}J_{l}) $ and $t'_r = t_r -  x_{n-r+1, 1}x_{n+r, d}$. Lastly, we set $m:=  x_{n-r+1,d}^{-1} t'_r $.
We refer the reader to Subsection \ref{r>1} for the rest undefined terms.

From Theorem \ref{simpler} and Lemma \ref{sym} we obtain that the special fiber $\overline U_{d,l} $ is given by the quotient of $k[B_1|B_2]$ by the ideal $(\wedge^2 (B_1 \brokenvert B_2) , t_r )$.
Set $J_1 = I_1 + \left(\wedge^2 (B_1 \brokenvert B_2) ,\,\,  t_r \right) $ where 
$$ I_1 = \bigg(( \sum_{a= n-(r-1)}^{n} x_{d+1-a,d+1-i}x_{a,j})_{i,j \in Z^{c}}, \,\,  \wedge^2 (B_1 \brokenvert B_2) \bigg).$$

By localizing $ k[B_1|B_2]/ (\wedge^2 (B_1 \brokenvert B_2) , t_r) $ at $ J_1$ we have 
 $$ \left(\frac{ k[B_1|B_2]}{\wedge^2 (B_1 \brokenvert B_2) ,  t_r}\right)_{ J_1} \cong  \frac{ k[B_1|B_2]_{I_1}}{\left(\wedge^2 (B_1 \brokenvert B_2) , t_r \right)_{I_1}}  .$$
 In the proof of Lemma \ref{prime r>1} we used the fact that $ x_{n-r+1,1} \notin I_1.$ Similarly, we have that $  x_{n-r+1,d} \notin I_1.$ 
\smallskip

\noindent {\bf Claim:} {\sl  $$ \left(\wedge^2 (B_1 \brokenvert B_2) ,\,\, t_r\right)_{I_1}$$ $$= \bigg( (x_{i,j} - x_{n-r+1,1}^{-1}x_{i,1} x_{n-r+1,j})_{\substack{i \in  Z_1, \, j \in Z_2} },\,\, (x_{n+r,1}+m)  \bigg)_{I_1}  .$$} 
\smallskip

 {\sl Proof of the claim:}
 From the minors we have $x_{i,j} x_{n-r+1,1} - x_{i,1} x_{n-r+1,j} = x_{n-r+1,1}(x_{i,j} - x_{n-r+1,1}^{-1}x_{i,1} x_{n-r+1,j}).$
We rewrite $ t_r$ as: 
$$ t_r= x_{n-r+1,1}x_{n+r,d}+ t'_r. $$
Combining the above relation with the minor $ x_{n-r+1,1}x_{n+r,d} =  x_{n-r+1,d}x_{n+r,1}$ 
we obtain 
$$ x_{n-r+1,1}x_{n+r,d}+ t'_r = x_{n-r+1,d}(x_{n+r,1}+ m).$$
Now, because $x_{n-r+1,1}, x_{n-r+1,d} \notin I_1 $ the claim follows. Combining all the above we have:
 \begin{eqnarray*}  
 && \frac{ k[B_1|B_2]_{I_1}}{\left(\wedge^2 (B_1 \brokenvert B_2) ,\,\, t_r\right)_{I_1}}  \\  
 &\cong &\frac{k[B_1|B_2]_{I_1}}{\bigg( (x_{i,j} - x_{n-r+1,1}^{-1}x_{i,1} x_{n-r+1,j})_{\substack{i \in  Z_1,\,  j \in Z_2} },\,\, (x_{n+r,1}+m)  \bigg)_{I_1}  } \\
 &\cong&  k [(x_{i,1})_{ i \in Z \setminus \{ n+r \}},\,\, (x_{n-r+1,j})_{j \in Z^{c}},\,\,x_{n-r+1,1}^{-1},\,\,x_{n-r+1,d}^{-1}]_{J_1}  
\end{eqnarray*} 
 where the last one is a reduced ring. Thus, the special fiber of $U_{d,l}$ is reduced.\qed

\section{Irreducible Components}\label{comp.}

We continue to assume $d=2n$, $l=2r$.

Recall that $U_{d,l} = \Spec(S/I)$ and $\overline U_{d,l} $ is the special fiber of $U_{d,l} $. In this section, the main goal is to calculate the irreducible components of $\overline U_{d,l} $.
\begin{Theorem}\label{irred}

(i) For $r=1$ and $r=n-1$, $\overline U_{d,l} $ has three irreducible components.
  
   (ii)  For $1<r<n-1$, $\overline U_{d,l} $ has two irreducible components.
   
\end{Theorem}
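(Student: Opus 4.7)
The plan is to apply Theorem \ref{simpler} to identify $\overline U_{d,l}$ with the hypersurface $\{t_r = 0\}$ inside the determinantal variety $D := \Spec(k[B_1|B_2]/(\wedge^2(B_1|B_2)))$, where $t_r := Tr(B_2 J_{n-r} B_1^t J_{2r})$. The ring $\mathcal{O}[B_1|B_2]/(\wedge^2(B_1|B_2))$ was observed to be an integral domain in the proof of Theorem \ref{C-M mixed} (following \cite{DR}); the same holds after reduction modulo $\pi$, so $D$ is irreducible. Hence the irreducible components of $\overline U_{d,l}$ are exactly those of the hypersurface $\{t_r = 0\}$ in $D$.

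The main step is to pull back along the rank-one parametrization $\phi : \mathbb{A}^{2r} \times \mathbb{A}^{2(n-r)} \to D$, $(v, u) \mapsto v u^t$, which is surjective and, on the open locus $\{v \neq 0,\, u \neq 0\}$, is a principal $\mathbb{G}_m$-bundle under $(v, u) \mapsto (\lambda v, \lambda^{-1} u)$. Writing $u = (w_1, w_2)$ with $w_i \in k^{n-r}$ and substituting $B_1 = v w_1^t$, $B_2 = v w_2^t$, a short direct computation (using the cyclic property of the trace and the scalar $w_2^t J_{n-r} w_1$) yields
\[
\phi^*(t_r) = Tr\bigl(v w_2^t J_{n-r} w_1 v^t J_{2r}\bigr) = (v^t J_{2r} v)(w_2^t J_{n-r} w_1).
\]
Thus $\phi^{-1}(\overline U_{d,l}) = V_1 \cup V_2$, where $V_1 := \{v^t J_{2r} v = 0\}$ and $V_2 := \{w_2^t J_{n-r} w_1 = 0\}$ are the zero loci of two non-degenerate quadratic forms in the disjoint sets of variables $\{v_i\}$ and $\{w_{\ell,j}\}$ (their symmetric Gram matrices, $J_{2r}$ and the block form with antidiagonal blocks $J_{n-r}$, are invertible).

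Since a non-degenerate quadratic form in $N$ variables is irreducible as a polynomial iff $N \geq 3$, I obtain: for $2 \leq r \leq n-2$, both $V_1$ and $V_2$ are irreducible, giving two components upstairs; for $r = 1$ one has $v^t J_2 v = 2 v_1 v_2$, so $V_1 = \{v_1 = 0\} \cup \{v_2 = 0\}$ while $V_2$ remains irreducible (as $n - 1 \geq 2$), yielding three components; the case $r = n - 1$ is symmetric. To descend to $\overline U_{d,l}$, note that each of these pieces is $\mathbb{G}_m$-invariant, so $\phi$ maps it to an irreducible closed subset of $D$ of codimension one, and distinct pieces produce distinct images (choose a point whose $v$-coordinate lies on one defining quadric but not the other). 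The main obstacle is precisely this descent: to make rigorous that no two $\mathbb{G}_m$-invariant pieces upstairs collapse downstairs under $\phi$. This is most cleanly handled algebraically, by invoking Lemmas \ref{prime r=1} and \ref{prime r>1} (already used in the proof of Theorem \ref{spec.fib.red.}), which explicitly list the minimal primes of $\overline U_{d,l}$; Theorem \ref{irred} then follows at once by counting them.
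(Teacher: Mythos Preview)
Your approach is genuinely different from the paper's and is in many ways more conceptual. The paper never uses the rank-one parametrization $\phi:(v,u)\mapsto vu^t$; instead it writes down candidate ideals $I_1,I_2$ (and $I_3$ when $r\in\{1,n-1\}$), checks the set-theoretic decomposition $V(I_s)=\bigcup V(I_i)$ by hand, and then proves each $V(I_i)$ irreducible by passing to projective space and covering by standard affine charts. Your factorization $\phi^*(t_r)=(v^tJ_{2r}v)(w_2^tJ_{n-r}w_1)$ explains in one line \emph{why} the component count jumps at $r=1$ and $r=n-1$: these are exactly the values for which one of the two quadratic factors becomes a binary form and splits. This is cleaner than the paper's case analysis.

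There is, however, a genuine circularity in your last paragraph. You resolve the descent by invoking Lemmas~\ref{prime r=1} and~\ref{prime r>1}, but in the paper those lemmas are proved \emph{using} Theorem~\ref{irred}: the proof of Lemma~\ref{prime r=1} begins ``From Theorem~\ref{irred}~$(i)$ we have that $I_3$ is a primary ideal\ldots'', and Lemma~\ref{prime r>1} proceeds ``exactly as in Lemma~\ref{prime r=1}''. So citing them to establish Theorem~\ref{irred} is not permitted, and once you do cite them the entire parametrization argument becomes superfluous.

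The fix is to carry out the direct descent you already sketched, which is not hard. Since $\phi$ restricted to $U:=\{v\neq 0,\,u\neq 0\}$ is a $\mathbb{G}_m$-torsor over $D\setminus\{0\}$, there is a bijection between $\mathbb{G}_m$-stable irreducible closed subsets of $U$ and irreducible closed subsets of $D\setminus\{0\}$. Each factor $V_i\cap U$ is nonempty, open in the irreducible $V_i$, and $\mathbb{G}_m$-stable; your ``choose a point on one quadric and off the other'' argument shows their images are pairwise distinct with no containments. Finally, $\overline U_{d,l}$ is pure of dimension $d-2>0$ (it is a hypersurface in the integral Cohen--Macaulay variety $D$, cf.\ Remark~\ref{C-M remark}), so the origin is not a component and the components of $\overline U_{d,l}$ are exactly the closures of those of $\overline U_{d,l}\setminus\{0\}$. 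This closes the gap without any appeal to Lemmas~\ref{prime r=1} or~\ref{prime r>1}.
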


The proof of the theorem will be carried out in Subsection \ref{r=1} (case ($i$)) and in Subsection \ref{r>1} (case ($ii$)).
\subsection{Case (i)}\label{r=1}
In this subsection we will prove Theorem \ref{irred} in the case $r=1$. A similar argument works in the case $r=n-1$. For this subsection we introduce the following notation. Observe that when $r=1$, $Z  = \{ n,n+1\}  $ and $ Z^{c} = \{ 1,2,3, \ldots, d\} \setminus Z $. We  rename the variables as follows:  
$$
v_i = x_{n,i}, \ \hbox{\rm for} \, \,  i\in Z^{c} \ \hbox{\rm and}\, w_j = x_{n+1,j}, \ \hbox{\rm for}\,\,  j \in Z^{c} .
$$
Define the polynomial ring $ \displaystyle S_{\text{sim}}= k[v_i,w_j]_{i,j\in Z^{c}} $. From Theorem \ref{simpler} we obtain that the special fiber is isomorphic to $S_{\text{sim}} /  I_{\text{sim}}$ where 
$$
I_{\text{sim}} = \Bigg( \sum_{i=1}^{n-1} v_{i}w_{2n+1-i}, \quad (v_{i}w_{j}-v_{j}w_{i})_{i,j\in Z^{c}}\Bigg).
$$
It is not very hard to observe, by using the minors, that 
$$ 
I_{\text{sim}} = \Bigg( \sum_{i=1}^{n-1} v_{i}w_{2n+1-i}, \quad(v_{i}\sum_{j=1}^{n-1} w_{j}w_{2n+1-j})_{i\in Z^{c}},\quad(w_{i} \sum_{j=1}^{n-1} v_{j}v_{2n+1-j})_{i\in Z^{c}},
$$
$$
\sum_{i=1}^{n-1} v_{i}v_{2n+1-i} \sum_{j=1}^{n-1} w_{j}w_{2n+1-j},\quad (v_{i}w_{j}-v_{j}w_{i})_{i,j\in Z^{c}}\Bigg). 
$$ 
Next, we set
$$
I_1= \Bigg( (v_{i})_{i\in Z^{c}} \Bigg), \quad  I_2 = \Bigg( (w_{i})_{i\in Z^{c}}\Bigg)
$$
and
$$
I_3 = \Bigg(\sum_{i=1}^{n-1} v_{i}v_{2n+1-i}, \quad \sum_{i=1}^{n-1} w_{i}w_{2n+1-i} , \quad
\sum_{i=1}^{n-1} v_{i}w_{2n+1-i}, \quad  (v_{i}w_{j}-v_{j}w_{i})_{i,j\in Z^{c}}   \Bigg) .
$$
{\sl Proof of Theorem \ref{irred} $(i)$:} From the above, it suffices to calculate the irreducible components of $V(I_{\text{sim}}).$
 
Observe that the elements
$$
(v_{i}\sum_{j=1}^{n-1} w_{j}w_{2n+1-j})_{i\in Z^{c}},\quad(w_{i} \sum_{j=1}^{n-1} v_{j}v_{2n+1-j})_{i\in Z^{c}}, \quad \sum_{i=1}^{n-1} v_{i}v_{2n+1-i} \sum_{j=1}^{n-1} w_{j}w_{2n+1-j}
$$
belong to $I_{\text{sim}}$. Therefore, we can easily see that 
$$ V( I_{\text{sim}}) = V(I_1) \cup V(I_2) \cup V(I_3).$$
Observe also that 
$$
S_{\text{sim}}/I_1 \cong k [(w_{j})_{j \in Z^{c} }], \qquad  S_{\text{sim}}/I_2 \cong k [(v_{j})_{j \in Z^{c} }]. $$
Thus, the closed subschemes $ V(I_1),V(I_2)$ are affine spaces of dimension $d-2$ and so they are irreducible and smooth. We have to check that the third one is irreducible and of dimension $d-2$. Notice that $I_3 $ is generated by homogeneous elements. Thus, it suffices to prove that the projectivization 
$$ V_p(I_3) \subseteq \mathbb{P}_k^{2d-5} $$
of the affine cone $ V(I_3)$ is irreducible. Consider
$$ V_{v_1} : = V_p(I_3) \cap U_{v_1}, $$
where $ U_{v_1} = D( v_{1} \neq 0) $. We can see that it is isomorphic to 
$$
\Spec( k [ (\frac{v_{i}}{v_1})_{ i \in Z^{c} \setminus \{ 1,d \} },\frac{w_{1}}{v_1}]),
$$
and so it is irreducible. By symmetry   we  have a similar result for every $ V_{v_i}$ and $ V_{w_j}$ with $ i,j \in Z^{c}$. Moreover, the $ V_{v_i}$ and $ V_{w_j}$ form a finite open cover of irreducible open subsets of $V_p(I_3)$. Thus $V_p(I_3)$ is irreducible and so $ V(I_3) \subseteq \mathbb{A}_k^{2d-4}$ is irreducible. This completes the proof
of Theorem \ref{irred} $(i)$. \qed

We can go one step further and prove that:
\begin{Lemma}\label{prime r=1}
 The ideals $I_1 , I_2 , I_3$ are prime.
\end{Lemma}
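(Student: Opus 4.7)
For $I_1$ and $I_2$, primality is immediate: the quotients $S_{\text{sim}}/I_1\cong k[(w_j)_{j\in Z^c}]$ and $S_{\text{sim}}/I_2\cong k[(v_i)_{i\in Z^c}]$ are polynomial rings. The substance of the lemma lies in $I_3$, and my plan is to exhibit an embedding of $S_{\text{sim}}/I_3$ into a known integral domain using the Segre-type parametrization that underpins the proof of irreducibility of $V(I_3)$ just given.

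Introduce auxiliary indeterminates $s,t,u_i$ ($i\in Z^c$) carrying the $\mathbb{G}_m$-action $s\mapsto\lambda s$, $t\mapsto\lambda t$, $u_i\mapsto\lambda^{-1}u_i$, set $Q:=\sum_{i=1}^{n-1}u_i u_{d+1-i}$, and define
\[
\phi:S_{\text{sim}}\longrightarrow T:=k[s,t,u_i]_{i\in Z^c}/(Q),\qquad v_i\mapsto s u_i,\ w_j\mapsto t u_j.
\]
A direct check gives $\phi(v_iw_j-v_jw_i)=0$ together with $\phi(\sum v_iv_{2n+1-i})=s^2Q$, $\phi(\sum v_iw_{2n+1-i})=stQ$, $\phi(\sum w_iw_{2n+1-i})=t^2Q$, so $\phi(I_3)=0$ and $\phi$ descends to $\bar\phi:S_{\text{sim}}/I_3\to T$. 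Since $d=2n\geq 6$, the form $Q$ is a non-degenerate hyperbolic quadratic form in $d-2\geq 4$ variables and hence irreducible, so $T$ is an integral domain and it suffices to prove $\bar\phi$ is injective.

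For injectivity I plan to invoke the classical fact about Segre varieties: the map $S_{\text{sim}}\to k[s,t,u_i]$ above has kernel exactly the minor ideal $((v_iw_j-v_jw_i)_{i,j\in Z^c})$ and realizes $S_{\text{sim}}/((v_iw_j-v_jw_i))$ as the $\mathbb{G}_m$-invariant subring $k[s,t,u_i]^{\mathbb{G}_m}$. Under this identification, the injectivity of $\bar\phi$ becomes the ideal identity
\[
(Q)\cap k[s,t,u_i]^{\mathbb{G}_m}\;=\;\bigl(s^2Q,\,stQ,\,t^2Q\bigr)\cdot k[s,t,u_i]^{\mathbb{G}_m},
\]
whose right-hand side is exactly the image of the three quadrics in $I_3$. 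The inclusion $\supseteq$ is immediate; for $\subseteq$, write an invariant $f\in(Q)$ as $f=Qg$, and note that since $f$ has weight $0$ and $Q$ has weight $-2$, the cofactor $g$ has weight $+2$. Any weight-$(+2)$ monomial $s^at^bu_{i_1}\cdots u_{i_k}$ satisfies $a+b=k+2\geq 2$, so a brief monomial-by-monomial check shows it can be split as one of $s^2$, $st$, or $t^2$ times a monomial of weight $0$; summing yields $g=s^2h_1+sth_2+t^2h_3$ with each $h_i\in k[s,t,u_i]^{\mathbb{G}_m}$, and $f=(s^2Q)h_1+(stQ)h_2+(t^2Q)h_3$ lies in the right-hand ideal. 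The main delicate point I anticipate is precisely this weight-zero decomposition; it works because $s^2,st,t^2$ are themselves homogeneous of weight $+2$. Once the identity is established, $\bar\phi$ is injective, so $S_{\text{sim}}/I_3$ embeds into the domain $T$ and $I_3$ is prime.
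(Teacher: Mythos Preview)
Your argument is correct and takes a genuinely different route from the paper. The paper does not embed $S_{\text{sim}}/I_3$ into a domain; instead it combines the irreducibility of $V(I_3)$ already established (so $I_3$ is primary) with a reducedness argument. For the latter it invokes Serre's criterion, observes that $\Spec(S_{\text{sim}}/I_3)$ is smooth away from the origin, and then shows $w_1$ is a non-zerodivisor by a Gr\"obner-basis style argument: every monomial occurring in a generator of $I_3$, and hence in any $\mathcal{S}$-polynomial remainder, involves two distinct variables, so no pure power $w_1^m$ can lie in the leading-term ideal of $I_3$.

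Your approach is more structural: you recognize the minor ideal as the kernel of the Segre parametrization and identify $S_{\text{sim}}/(\text{minors})$ with the weight-zero subring $R=k[s,t,u_i]^{\mathbb{G}_m}$; the three quadrics then become $s^2Q,\,stQ,\,t^2Q$, and the torus-weight bookkeeping reduces primality to the single ideal identity $(Q)\cap R=(s^2Q,stQ,t^2Q)R$, which you dispatch cleanly. This has the advantage of proving primality in one stroke, without separating irreducibility and reducedness, and avoids any Gr\"obner machinery; it does, however, rely on the classical fact that the $2\times 2$ minor ideal of a generic $2\times m$ matrix is exactly the kernel of the Segre map (equivalently, is prime), which you should cite explicitly. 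The paper's approach, by contrast, is more self-contained at that point but pays for it with the somewhat ad hoc Buchberger argument. Both proofs use in an essential way that $d-2\geq 4$ (in the paper this is hidden in the smoothness of the affine charts of $V_p(I_3)$; in yours it guarantees irreducibility of $Q$).
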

\begin{proof}
 From the proof of Theorem \ref{irred} $(i)$, $I_1,I_2$ are clearly prime ideals. It suffices to prove that $I_3$ is prime.  From Theorem \ref{irred} $(i)$ we have that $I_3$ is a primary ideal and so every zero divisor in $ D :=    k[v_i,w_j]_{i,j\in Z^{c}} /I_3$ is a nilpotent element. Hence, it suffices to prove that $ D$ is reduced. Serre's criterion for reducedness (\cite{Ma} $17.$I) states: the ring $D$ is reduced if and only if $D$ satisfies ($S_1$) and ($R_0$). From the proof of Theorem 7.0.1 $(i)$ we have that the scheme $  \Spec(D)$ is smooth over $\Spec(k)$ outside from its closed subscheme of dimension $0$ which is defined by the ideal $( (v_{j})_{j \in Z^{c}},(w_{i})_{i \in Z^{c} } ). $ Thus, the condition ($R_0$) is satisfied. So, it suffices to prove: $\text{min}(1,\text{ht}(\mathfrak{p}))\leq \text{depth}D_{\mathfrak{p}}$. The interesting primes $ \mathfrak{p}$ are those which contain $( (v_{j})_{j \in Z^{c}},(w_{i})_{i \in Z^{c} } )$, because otherwise we are in the smooth case so the desired inequality is satisfied. Thus, it suffices to find an element $f $ in the ideal $ ( (v_{j})_{j \in Z^{c}},(w_{i})_{i \in Z^{c} } )$ which is regular in the quotient ring $D$.
\smallskip

\noindent {\bf Claim:} {\sl The element $w_1$  is regular in $D$ and so we can take $f=w_1$.}
\smallskip

 {\sl Proof of the claim:} Assume   that $w_{1}$ is not a regular element in $D$. Then, because $I_3$ is primary, we have $ w_{1}^m \in I_3$ for some $m>0$  (see \cite{Ma} $1.$A). We will obtain a contradiction by using  Buchberger's algorithm. This is a method of transforming a given set of generators for a polynomial ideal into a Gröbner basis with respect to some monomial order. For more information about this algorithm we refer the reader to   \cite[Chapter 2]{Cox}. 
 
Set $R= k[v_i,w_j]_{i,j\in Z^{c}}$ for the polynomial ring.
We choose the following order for our variables:
$$ v_{1}>v_{2}>\cdots >v_{d}>w_{1}>\cdots >w_{d}.$$ 
Then, the graded lexicographic order  induces an order of all  monomials in $R$.

Next, we recall the division algorithm in $ R$. We fix the above monomial ordering. Let $J = ( f_1, \ldots , f_s)$ be an ordered $s$-tuple of polynomials in $R$. Then
every $g \in R$ can be written as
 $$g= a_1 f_1 + \ldots + a_s f_s + r,$$
where $a_i,r \in R$ and either $r = 0$ or $r$ is a linear combination, with coefficients in $k$, of monomials, none of which is divisible by any of $LT( f_1), \ldots , LT( f_s)$. By $LT(f_i)$ we denote the leading term of $f_i$. We will call $r$ a remainder of $g$ on division by $J$. (See     \cite[Chapter 2]{Cox} for more details.)

Recall that the $\mathcal{S}$-polynomial of the pair $f$, $g\in R$ is
$$
 \mathcal{S}(f,g) = \frac{LCM(LM(f),LM(g))}{LT(f)}f - \frac{LCM(LM(f),LM(g))}{LT(g)}g.
 $$ 
 Here, by $LM(f)$ we denote the leading monomial of $f$ according to the above ordering. 
 
To find the Gröbner basis for the ideal $ I_3$, we start with the generating set $$\Bigg\{ \sum_{i=1}^{n-1} v_{i}v_{2n+1-i}, \sum_{i=1}^{n-1} w_{i}w_{2n+1-i} , \sum_{i=1}^{n-1} v_{i}w_{2n+1-i},(v_{i}w_{j}-v_{j}w_{i})_{i,j\in Z^{c}} \Bigg\}  .$$
Then, we calculate all the $\mathcal{S}$-polynomials $\mathcal{S}(f,g)$, where $f,g$ are any two generators from the generating set that we have started. If all the $\mathcal{S}$-polynomials are divisible by the generating set then  the generating set already forms a Gröbner basis. On the other hand, if a remainder is nonzero we extend our generating set by adding this remainder and we repeat the above process until we have a generating set where all the $\mathcal{S}$-polynomials are divisible by the generating set. In our case, the generators of $I_3$ are homogeneous  polynomials of degree $2$. The monomials of those homogeneous polynomials have one of the following forms: \begin{enumerate}
    \item $ v_i v_j$ with $ i \neq j$, or
    \item $ w_i w_j$ with $ i \neq j$, or
    \item $ w_i v_j$ with $ i \neq j$. 
\end{enumerate}
Thus, the nonzero remainder of any $\mathcal{S}$-polynomial is a polynomial where each monomial is divisible by at least one monomial of the above three forms. By this observation we can see that, the Gröbner basis cannot have a monomial that looks like $ w^m_i$ or $ v^m_j$. Now, let $\{g_1 \ldots, g_{N}\}$ be the Gröbner basis of $I_3$. From the above we have $$ w^{m}_1 \notin \langle LT(g_1),\ldots ,LT(g_{N}) \rangle .$$ 
Moreover, because $\{g_1 \ldots, g_{N}\}$ is the Gröbner basis of $I_3$ we have that
$$  \langle LT(g_1),\ldots ,LT(g_{N}) \rangle  = LT(I_3),$$
(see   \cite[Chapter 5]{Cox}). By $LT(I_3) $ we denote the ideal generated by all the leading terms of elements of $I_3$. Therefore,
$$ w^{m}_1 \notin LT(I_3).$$
Hence, $ w_{1}$ is a regular element in $D$ and so $I_3$ is a prime ideal. This completes the proof of the claim
and by the above the proof of lemma.
\end{proof}
\subsection{Case (ii)}\label{r>1}
In this subsection we will prove Theorem \ref{irred} in the case $1<r<n-1.$ In this case we have $Z = \{ n-(r-1), \ldots,n,n+1,  \ldots ,d-n+r\}$ and $ Z^{c} = \{ 1,2,3, \ldots, d\} \setminus Z $. For the undefined terms below we refer the reader to Section \ref{sect4} and \ref{special fiber}. From Theorem \ref{simpler} we obtain that the special fiber $\overline U_{d,l} $ is given by the quotient of $k[B_1|B_2]$ by the ideal $ I_s =  (\wedge^2 (B_1 \brokenvert B_2) , t_r )$. Also with direct calculations and by using the minors we can see that 
$$
I_s = \bigg(\wedge^2 (B_1 \brokenvert B_2) ,\, t_r , \, \bigg( \sum_{a= n-(r-1)}^{n} x_{d+1-a,d+1-t}x_{a,s} \sum_{b=1}^{n-r} x_{i,b}x_{d+1-j,d+1-b}  \bigg)_{\substack{i,j \in Z \\ t,s \in Z^{c}}}  \bigg). 
$$
Next, set 
$$ I_1= \Bigg(  \bigg( \sum_{a= n-(r-1)}^{n} x_{d+1-a,d+1-t}x_{a,s}\bigg)_{t,s \in Z^{c}}, \quad \wedge^2 (B_1 \brokenvert B_2) \bigg) $$
and
$$ I_2 = \Bigg( \bigg(\sum_{a=1}^{n-r} x_{i,a}x_{d+1-j,d+1-a}  \bigg)_{i,j \in Z}, \quad \wedge^2 (B_1 \brokenvert B_2) \Bigg).$$
 {\sl Proof of Theorem \ref{irred} $(ii)$:} From the above, it suffices to calculate the irreducible components of $V(I_s)$.

Observe that
$$ \bigg( \sum_{a= n-(r-1)}^{n} x_{d+1-a,d+1-t}x_{a,s} \sum_{b=1}^{n-r} x_{i,b}x_{d+1-j,d+1-b}  \bigg)_{\substack{i,j \in Z \\ t,s \in Z^{c}}}  \in I_s.$$
Therefore, we can easily see that 
$$ V( I_s) = V(I_1) \cup V(I_2).$$
Next, we prove that the closed subschemes $ V(I_1)$ and $V(I_2)$ are irreducible of dimension $ d-2.$
We will start by proving that $V(I_1)$ is an irreducible component. Observe that $I_1 $ is generated by homogeneous elements. Thus, it suffices to prove that the projectivization 
$$ 
V_p(I_1) \subseteq \mathbb{P}_k^{4r(n-r)-1} .
$$
of the affine cone $ V(I_1)$ is irreducible. We look at 
 $$ V_{x_{n-(r-1),1}} : = V_p(I_1) \cap U_{x_{n-(r-1),1}}, $$
 where $ U_{x_{n-(r-1),1}} = D( x_{n-(r-1),1} \neq 0) $. We can see that it is isomorphic to 
 $$ \Spec ( k [ (\frac{x_{n-(r-1),j}}{x_{n-(r-1),1}})_{j\in Z^{c}\setminus\{1\}},(\frac{x_{t,1}}{x_{n-(r-1),1}})_{ n-(r-2)\leq t \leq n+r-1}])$$
 and so it is irreducible. Because of the symmetry of the relations we will have a similar result for every $ V_{x_{t,s}}$ with $ t \in Z, s \in Z^{c} $. Moreover,
 $$ (V_{x_{t,s}})_{\substack{t \in Z, \,  s \in Z^{c}}}$$ 
 form a finite open cover of irreducible open subsets of $V_p(I_1)$ and thus we get that $V_p(I_1)$ is irreducible and so $ V(I_1) $ is irreducible of dimension $d-2$.
 
 We use a similar argument for $V(I_2)$. $I_2 $ is generated by homogeneous elements and so by using the projectivization 
  $$ V_p(I_2) \subseteq \mathbb{P}_k^{4r(n-r)-1} $$
  of the affine cone $ V(I_2)$, it suffices to prove that $V_p(I_2)$ is irreducible. We look at 
  $$ V_{x_{n-(r-1),1}} : = V_p(I_2) \cap U_{x_{n-(r-1),1}}, $$
  where $ U_{x_{n-(r-1),1}} = D( x_{n-(r-1),1} \neq 0) $. We can see that it is isomorphic to 
  $$ \Spec (  k [ (\frac{x_{n-(r-1),j}}{x_{n-(r-1),1}})_{j\in Z^{c}\setminus\{1,d\}},(\frac{x_{t,1}}{x_{n-(r-1),1}})_{ n-(r-2)\leq t \leq n+r} ]) $$
  and so it is irreducible. Therefore, $V(I_2)$ is irreducible of dimension $d-2$. This completes the proof of Theorem \ref{irred} $(ii)$. \qed
 
Next, we prove that:
\begin{Lemma}\label{prime r>1}
The ideals $I_1, I_2$ are prime.
\end{Lemma}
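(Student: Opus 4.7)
I plan to adapt the strategy of Lemma \ref{prime r=1}. For each $i \in \{1, 2\}$, Theorem \ref{irred}(ii) shows that $V(I_i)$ is irreducible, so $\sqrt{I_i}$ is a prime ideal; it then suffices to prove that $D_i := k[B_1|B_2]/I_i$ is reduced, from which $I_i = \sqrt{I_i}$ is prime.

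To verify reducedness of $D_i$ I will invoke Serre's criterion $(R_0) + (S_1)$. Condition $(R_0)$ follows from the smooth affine cover of $V(I_i) \setminus \{0\}$ given by the charts $V_{x_{t,s}}$ ($t\in Z$, $s\in Z^{c}$) in the proof of Theorem \ref{irred}(ii): each such chart is isomorphic to the spectrum of a polynomial ring over $k$. In particular $D_i$ is smooth at its generic point, and the singular locus of $V(I_i)$ is pinned inside $\{0\}$; consequently any candidate embedded associated prime of $D_i$ must be contained in the maximal ideal $\mathfrak{m}_0$ at the origin, so $(S_1)$ reduces to exhibiting a single non-zero-divisor in $\mathfrak{m}_0$.

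To produce such a non-zero-divisor I will run the Gröbner basis / Buchberger computation from the end of the proof of Lemma \ref{prime r=1}. Fix a lexicographic order on $k[B_1|B_2]$. Each generator of $I_i$ is homogeneous of degree $2$ and all of its monomials are products of two \emph{distinct} variables: for the $2\times 2$ minors $x_{i,j}x_{t,s}-x_{i,s}x_{t,j}$ this is built in ($i\neq t$, $j\neq s$); for the quadratic sum $\sum_{a=n-r+1}^{n} x_{d+1-a,d+1-t}x_{a,s}$ appearing in $I_1$ the indices $a$ and $d+1-a$ lie in the disjoint ranges $\{n-r+1,\ldots,n\}$ and $\{n+1,\ldots,n+r\}$, forcing distinct first indices of the two factors; and for $\sum_{a=1}^{n-r}x_{i,a}x_{d+1-j,d+1-a}$ appearing in $I_2$ either $i\neq d+1-j$ or else the second indices $a$ and $d+1-a$ differ. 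This structural property is clearly preserved by multiplication by monomials and by subtraction, hence by every $\mathcal{S}$-polynomial reduction in Buchberger's algorithm. It follows that every leading monomial of every element of any Gröbner basis of $I_i$ is divisible by a product of two distinct variables; no pure power $x_{p,q}^m$ can lie in $\mathrm{LT}(I_i)$; and so $x_{p,q}^m \notin I_i$ for all $m\geq 1$, i.e., $x_{p,q}$ is non-nilpotent in $D_i$. As in Lemma \ref{prime r=1}, combined with the confinement of embedded associated primes to $\mathfrak{m}_0$, this produces a non-zero-divisor in $\mathfrak{m}_0$ and completes the verification of $(S_1)$.

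The main obstacle, as in Lemma \ref{prime r=1}, is the logical passage from ``$x_{p,q}$ is not nilpotent'' (the direct output of the Gröbner argument) to ``$x_{p,q}$ is not a zero-divisor'' in $D_i$. This step rests on the fact that, by the generic smoothness of $V(I_i)$ established in the proof of Theorem \ref{irred}(ii), the only possible embedded associated prime of $D_i$ is $\mathfrak{m}_0$, so the existence of a non-nilpotent element of $\mathfrak{m}_0$ already prevents $\mathfrak{m}_0$ from being associated. The remaining combinatorial verification—that the ``two distinct variables'' invariant is preserved throughout Buchberger's algorithm—is a routine check given the explicit form of the generators.
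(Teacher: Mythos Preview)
Your overall strategy matches the paper's exactly: reduce primality to reducedness of $D_i$ via irreducibility, invoke Serre's criterion using the smoothness of $V(I_i)\setminus\{0\}$ established in the proof of Theorem~\ref{irred}(ii), and then run a Gr\"obner/Buchberger argument to show that no pure power $x_{p,q}^m$ lies in $I_i$. The Gr\"obner part is fine and mirrors the paper's argument; the paper fixes the specific variable $x_{n-r+1,1}$ and a specific graded-lex order, but the substance is the same ``no monomial of any generator is a pure square'' observation you make.

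The genuine gap is in your bridge from ``$x_{p,q}$ is non-nilpotent'' to ``$\mathfrak m_0$ is not an associated prime.'' The sentence ``the existence of a non-nilpotent element of $\mathfrak m_0$ already prevents $\mathfrak m_0$ from being associated'' is false as stated. A clean counterexample is $R=k[x,y]/(x^2,xy)$: here $\mathrm{Spec}(R)$ is irreducible with unique minimal prime $(x)$, the punctured spectrum $D(y)\cong\mathrm{Spec}\,k[y,y^{-1}]$ is regular so the only candidate embedded prime is $\mathfrak m_0=(x,y)$, and $y\in\mathfrak m_0$ is non-nilpotent; nevertheless $\mathfrak m_0=\mathrm{Ann}(x)$ \emph{is} an embedded associated prime. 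So confinement of possible embedded primes to $\mathfrak m_0$ together with non-nilpotence of an element of $\mathfrak m_0$ does not rule $\mathfrak m_0$ out.

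The paper handles this step differently: it first asserts (citing Theorem~\ref{irred}) that $I_i$ is a \emph{primary} ideal. In a quotient by a primary ideal every zero-divisor is nilpotent, so ``$x_{n-r+1,1}$ not regular'' would force $x_{n-r+1,1}^m\in I_i$, which the Gr\"obner computation then contradicts. To align with the paper you should replace your bridging implication by this primary-ideal step (i.e., argue that $I_i$ is primary, so zero-divisor $\Rightarrow$ nilpotent) rather than the claim you wrote; the Gr\"obner output by itself only gives non-nilpotence, not the depth-$\geq 1$ statement you need.
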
 
\begin{proof}
To see that $I_1, I_2$ are prime ideals one proceeds exactly as in Lemma \ref{prime r=1}. So, it suffices to find a regular element $f$ such that $f \in ( (x_{t,s})_{ t \in Z, s \in Z^{c} } )$.  We claim that $x_{n-r+1,1}$ is a choice for $f$. Assume for contradiction that $x_{n-r+1,1}$ is not a regular element. Then, because $I_i$ is primary, we should have that $ x_{n-r+1,1}^m \in I_i$ for some $m>0.$

We choose the following order for our variables $ (x_{t,s})_{ t \in Z, s \in Z^{c} }$:
$$ x_{n-r+1,1}>\ldots>x_{n-r+1,n-r}>x_{n-r+1,n+r+1}>\ldots>  x_{n-r+1,d}> x_{n-r+2,1}>$$ $$\ldots>x_{n-r+2,d}> \ldots >x_{n+r,1}>\ldots>x_{n+r,n-r}>x_{n+r,n+r+1}>\ldots>x_{n+r,d}. $$ 
Then, the graded lexicographic order induces an ordering to all the monomials. First, let's consider the ideal $I_1 $. In order to find the Gröbner basis for $ I_1$, we start with the generating set 
$$  \Bigg\{ \bigg( \sum_{a= n-(r-1)}^{n} x_{d+1-a,d+1-i}x_{a,j}\bigg)_{i,j \in Z^{c}}, \quad  \wedge^2 (B_1 \brokenvert B_2)  \Bigg\}.$$
After that we calculate all the $\mathcal{S}$-polynomials $\mathcal{S}(f,g)$, where $f,g$ are any two generators from the generating set that we have started; so in our case is $I_1$. The generators of $I_1$ are homogeneous  polynomials of degree $2$. The monomials of those homogeneous polynomials have one of the following form:
$$ (x_{i,j}x_{t,s} )$$
with $ i,t \in Z$, $ j,s \in Z^{c}$ and either $ i \neq t$ or $ j \neq s$. Thus, the nonzero remainder of any $\mathcal{S}$-polynomial is a polynomial where each monomial is divisible by at least one monomial of the above form. By this observation we can see that, the Gröbner basis cannot have a monomial that looks like $x_{n-r+1,1}^m$. Now, by using a Gröbner basis argument as in Lemma \ref{prime r=1} we deduce that $ x_{n-r+1,1}$ is a regular element and so $I_1$ is a prime ideal.

Now, by looking the ideal $I_2$ we have the generating set:
$$ \Bigg\{  \bigg(\sum_{a=1}^{n-2} x_{i,a}x_{d+1-j,d+1-a}   \bigg)_{i,j \in Z}, \quad \wedge^2 (B_1 \brokenvert B_2)\Bigg\}.$$
So, we observe that in this case also the generators of the ideal $ I_2$ are homogeneous polynomials of degree $2$. All the monomials have one of the following form: 
$$ (x_{i,j}x_{t,s} )$$
with $ i,t \in Z$, $ j,s \in Z^{c}$ and either $ i \neq t$ or $ j \neq s$. So, by using the same argument we can prove that $I_2$ is a prime ideal.
\end{proof}

\medskip

\section{The Remaining Cases}\label{remaining} 
In this section we sketch the proof of Theorems \ref{mthm1} and \ref{mthm2} in the remaining cases of parity for $d$ and $l$. The main point is that in all the cases the affine chart $U_{d,l} $ of the local model is displayed as a hypersurface in a determinantal scheme. The arguments are similar with the proof of the case $(d,l)= (\even,\even)$. In fact, in the case that 
$(d,l)= (\odd,\odd)$ the argument is exactly the same. The case of Theorem \ref{mthm2} (different parity) 
is somewhat different as we explain below.

\subsection{Proof of Theorem \ref{mthm2}}\label{prf3}
\begin{proof}
We use the notation from Subsection $3.3$. We also introduce some new notation.
Set 
  \[
  Z' : =  \{ n-r'+1,\ldots,n,n+2, \ldots ,d-n+r'\},\quad  (Z')^{c}: = \{ 1,2,3, \ldots, d\} \setminus Z'.
  \]
   Also, define the polynomial ring 
   \[
    \mathcal{O}[B_1 | Q | B_2] :=  \mathcal{O}[(x_{t,s})_{ t \in Z', s \in (Z')^{c} }].
    \]
     Lastly, set
$$
 \wedge^2 (B_1 \brokenvert Q \brokenvert B_2) := \Big( x_{i,j}x_{t,s} - x_{i,s}x_{t,j} \Big) _{{i,t \in Z' \ j,s \in (Z')^{c}}}. 
 $$  
We can now sketch the proof. Similar elementary arguments as in the proof of Theorem \ref{simpler} give
  that the quotient $\mathcal{O}[X]/I$ is isomorphic to the quotient  of $ \mathcal{O}[B_1 | Q | B_2]$ by the ideal 
 \[
  ( \wedge^2 (B_1 \brokenvert Q \brokenvert B_2), \  Tr( H  B_2 J_{n-r'} B_1^{t} H J'_{l+1} +\frac{1}{2} H  Q Q^{t} HJ'_{l+1} ) + 2 \pi ).
  \]
  This is the corresponding isomorphism to $ S/I\cong S''/I''$ of Theorem \ref{simpler}. So, the above ideal is now playing the role of $I''$.
  
In the course of proving Theorem \ref{simpler} we proved  Lemmas \ref{eq1} and \ref{minors}. The main ingredients for the proofs of these Lemmas are the relations given by the entries of $  B_2 J_{n-r} B_1^{t}- A J_{l}$ and by the element  $ Tr(A) + 2\pi$. The proofs of the corresponding Lemmas \ref{eq1} and \ref{minors} in the case of $d$ and $l$ with opposite parity, are similar to that of $(d,l)=(\text{even},\text{even} )$. In this case the role of $A$ is now played by $A'$ and  the relations $  B_2 J_{n-r} B_1^{t}- A J_{l}$ and $ Tr(A) + 2\pi$ are replaced by $  2 H  B_2 J_{n-r'} B^{t}_1 H  + H  Q Q^{t} H  - 2 H  A J'_{l+1} H $ and $Tr( A') + 2 \pi$.

The rest of the argument deducing flatness of $U_{d,l}$ is the same as before. Lastly, the same argument as in the proof of Theorem \ref{mthm1} shows that  $U_{d,l}\otimes_{\mathcal{O}} \breve F \subset \Mloc(\Lambda)\otimes_{\mathcal{O}} \breve F $ and $U_{d,l} \subset \Mloc(\Lambda) $. See also the proof of \cite[Theorem 5.2.1]{PaZa} for a variation of this proof.
\end{proof}

\section{Appendix}\label{Appendix}
In this section we present the irreducible components of the special fiber $\overline U_{d,l}$ of  $ U_{d,l} $ in the remaining cases. We omit the proofs which are similar to Theorem \ref{irred}. For the notation we refer the reader to Section \ref{sect4} and Section \ref{remaining} .

\subsection{} $(d,l)=(\odd,\odd)$.

\subsubsection{}
 When $l<d-2$, the irreducible components of $\overline U_{d,l}$ are the closed subschemes $ V(I_1)$ and $V(I_2)$ where:
$$
 I_1= \Bigg( 
\bigg( \sum_{a= n-(r-1)}^{n} x_{d+1-a,d+1-t}x_{a,s}+\frac{1}{2}x_{n+1,d+1-t}x_{n+1,s}\bigg)_{t,s \in Z^{c}},  \,\,\,   \wedge^2 (B_1 \brokenvert B_2) \bigg) 
$$
and
 $$ I_2 = \Bigg( \bigg(\sum_{a=1}^{n-r} x_{i,a}x_{d+1-j,d+1-a}  \bigg)_{i,j \in Z}, \,\,\, \wedge^2 (B_1 \brokenvert B_2) \Bigg).$$
    \subsubsection{} 
When $l=d-2$, the irreducible components of $\overline U_{d,l}$ are the closed subschemes $ V(I_1),V(I_2)$ and $V(I_3)$ where:
$$
I_1=  \Bigg( (x_{i,1})_{i\in Z} \Bigg), \quad I_2 =  \Bigg( (x_{i,d})_{i\in Z} \Bigg)
$$
and
$$ 
I_3 = \Bigg(  \sum_{a=n-(r-1)}^{n} x_{a,1}x_{d+1-a,1}+\frac{1}{2}x^2_{n+1,1},\quad   \sum_{a=n-(r-1)}^{n} x_{a,d}x_{d+1-a,d}+\frac{1}{2}x^2_{n+1,d},
$$ 
$$ 
 \wedge^2 (B_1 \brokenvert B_2), \quad  \sum_{a=n-(r-1)}^{n} x_{a,1}x_{d+1-a,d}+\frac{1}{2}x_{n+1,1}x_{n+1,d}   \Bigg) . 
$$
\smallskip

\subsection{}  $(d,l)=(\odd,\even)$. 

\subsubsection{} When $l>2$ the irreducible components of $\overline U_{d,l}$ are the closed subschemes $ V(I_1)$ and $V(I_2)$ where:
 $$ 
 I_1= \Bigg(  \bigg( \sum_{a= n-(r-1)}^{n} x_{d+1-a,d+1-t}x_{a,s}\bigg)_{t,s \in (Z')^{c}},\quad \wedge^2 ((B_1 \brokenvert Q \brokenvert B_2),
 $$
 $$
 \bigg(\sum_{a=n-(r-1)}^{n} x_{d+1-a,n+1}x_{a,j}\bigg)_{1\leq j \leq d}\Bigg)
 $$ 
and    
$$ 
I_2 = \Bigg(  \wedge^2 ((B_1 \brokenvert Q \brokenvert B_2), \quad \bigg(\sum_{a=1}^{n-r} x_{i,a}x_{d+1-j,d+1-a}+\frac{1}{2}x_{i,n+1}x_{d+1-j,n+1}   \bigg)_{i,j \in Z'}\Bigg). $$
    
    \subsubsection{} 
When $l=2$, the irreducible components of $\overline U_{d,l}$ are the closed subschemes $ V(I_1),V(I_2)$ and $V(I_3)$ where:
$$
I_1=  \Bigg( (x_{n,i})_{i\in (Z')^{c}} \Bigg), \quad I_2 =  \Bigg( (x_{n+2,i})_{i\in (Z')^{c}} \Bigg)
$$
and
$$ 
I_3 = \Bigg(  \sum_{a=1}^{n-1} x_{n,a}x_{n,d+1-a}+\frac{1}{2}x^2_{n,n+1},\quad  \sum_{a=1}^{n-1} x_{n+2,a}x_{n+2,d+1-a}+\frac{1}{2}x^2_{n+2,n+1},
$$ 
$$ 
\wedge^2 ((B_1 \brokenvert Q \brokenvert B_2), \quad \sum_{a=1}^{n-1} x_{n,a}x_{n+2,d+1-a}+\frac{1}{2}x_{n,n+1}x_{n+2,n+1}   \Bigg) . 
$$
\smallskip

\subsection{}  $(d,l)=(\even,\odd)$. The irreducible components of $\overline U_{d,l}$ are the closed subschemes $ V(I_1)$ and $V(I_2)$ where:
$$
I_1= \Bigg(   \bigg(\sum_{a=n-r}^{n-1} x_{d+1-a,n+1}x_{a,j}+\frac{1}{2}x_{n,n+1}x_{n,j}\bigg)_{1\leq j \leq d}, \quad \wedge^2 ((B_1 \brokenvert Q \brokenvert B_2),
$$
$$
\bigg( \sum_{a= n-r}^{n-1} x_{d+1-a,d+1-t}x_{a,s}+\frac{1}{2} x_{n,d+1-t}x_{n,s}\bigg)_{t,s \in (Z')^{c}} \Bigg), 
$$ 
$$ 
I_2 = \Bigg(   \bigg( \sum_{a=1}^{n-r-1} x_{i,a}x_{n,d+1-a} +\frac{1}{2}x_{i,n+1}x_{n,n+1}  \bigg)_{i \in Z'}, \quad
\wedge^2 ((B_1 \brokenvert Q \brokenvert B_2),
$$
$$
 \bigg( \sum_{a=1}^{n-r-1} x_{i,a}x_{d+1-j,d+1-a} +\frac{1}{2}x_{i,n+1}x_{d+1-j,n+1}  \bigg)_{i \in Z', j\in Z'\setminus \{ n \} }\Bigg).  
$$

\end{document}